\documentclass[reqno]{amsart}

\usepackage{amsmath}
\usepackage{amssymb}
\usepackage{amsthm}
\usepackage{mathrsfs}
\usepackage{mathtools}
\usepackage{graphicx}
\usepackage{longtable}
\usepackage{booktabs}
\usepackage{float}
\usepackage{aliascnt}
\usepackage[font=footnotesize]{caption}
\usepackage{enumerate}
\usepackage{cleveref}
\usepackage{enumitem}

\usepackage[top=1in, bottom=1in, left=1in, right=1in]{geometry}

\usepackage{tikz}
\usetikzlibrary{arrows.meta,calc}

\usepackage{caption}

\newcommand{\FF}{\mathbb{F}}

\newcommand{\ZZ}{\mathbb{Z}}

\renewcommand{\pmod}[1]{\text{ (mod }#1)}

\theoremstyle{definition}
\newtheorem{theorem}{Theorem}[section]

\newtheorem{proposition}[theorem]{Proposition}
\newtheorem{lemma}[theorem]{Lemma}
\newtheorem{corollary}[theorem]{Corollary}

\allowdisplaybreaks[4]

\newcommand{\xline}[2][]{%
  \mathrel{\overset{#2}{\rule[0.5ex]{1.5cm}{0.5pt}}}%
}

\title{Sharp vertex connectivity of Markoff graphs modulo $p$}
\author{Jie Ma}
\email{jiema@ustc.edu.cn}
\address{School of Mathematical Sciences, University of Science and Technology of China, Hefei, 230026, People's Republic of China}
\address{Yau Mathematical Sciences Center, Tsinghua University, Beijing 100084, China}
\author{Mengxi Yang}
\email{yangmx221b@outlook.com}
\address{School of Mathematical Sciences, University of Science and Technology of China, Hefei, 230026, People's Republic of China}
\author{Zichen Yang}
\email{zichenyang.math@gmail.com}
\address{School of Mathematical Sciences, University of Science and Technology of China, Hefei, 230026, People's Republic of China}
\date{\today}
\keywords{Markoff graphs, the Markoff equation, $2$-connectivity, components, bridges}
\subjclass[2020]{05C25, 05C40, 11D25}

\begin{document}

\begin{abstract}
For a prime $p$ and $k\in\FF_p$, the generalized Markoff graph $G_{p,k}$ is an undirected graph whose vertices are the solutions over the finite field $\FF_p$ of the normalized Markoff equation
\[
x_1^2+x_2^2+x_3^2=x_1x_2x_3+k,
\]
where two vertices are adjacent if they differ by a Vieta involution.
The Markoff graph $G_p$ is obtained from $G_{p,0}$ by removing the origin. The structure of $G_p$ has been the subject of extensive study; in particular, a major breakthrough of Bourgain, Gamburd, and Sarnak established that $G_p$ contains a giant connected component. Combined with Chen's remarkable divisibility theorem, this implies that $G_p$ is connected for all sufficiently large primes $p$. In the same paper, Bourgain, Gamburd, and Sarnak further asked whether the family $\{G_p\colon \text{primes }p\geq 5\}$ forms an expander family. This motivates us to investigate the robustness of connectivity in the Markoff graphs. 

The main result of this paper is proved in the general setting: for every prime $p\geq5$ and every $k\in\FF_p\setminus\{4\}$, each connected component $C$ of $G_{p,k}$ with $|V(C)|\geq 3$ is $2$-connected. Reducing to the case $k=0$, we conclude that if the Markoff graph $G_p$ is connected, then it is in fact $2$-connected. Consequently, the Markoff graph $G_p$ is $2$-connected for all sufficiently large primes $p$. This is sharp in the sense that $G_p$ is not $3$-connected for any prime $p\geq 7$. 
\end{abstract}

\maketitle

\section{Introduction}

The classical \emph{Markoff equation}
\begin{equation}
\label{eq:classical-markoff}
x_1^2+x_2^2+x_3^2=3x_1x_2x_3
\end{equation}
was introduced by Markoff \cite{Markoff1879} in his study of indefinite binary quadratic forms and Diophantine approximation. The Markoff equation and its generalizations arise in diverse areas of mathematics, ranging from Diophantine approximation to differential equations to hyperbolic geometry; for a comprehensive overview of the subject, see the survey by Silverman \cite{Silverman2026}. A fundamental feature of the Markoff equation is that the surface defined by it admits three non-commuting \emph{Vieta involutions}
\[
\begin{aligned}
(x_1,x_2,x_3)&\longmapsto(3x_2x_3-x_1,x_2,x_3),\\
(x_1,x_2,x_3)&\longmapsto(x_1,3x_1x_3-x_2,x_3),\\
(x_1,x_2,x_3)&\longmapsto(x_1,x_2,3x_1x_2-x_3).
\end{aligned}
\]
Markoff \cite{Markoff1879} proved that every positive integral solution to \eqref{eq:classical-markoff} can be obtained by repeatedly applying three Vieta involutions to the fundamental solution $(x_1,x_2,x_3)=(1,1,1)$. From the graph-theoretic perspective, one may form an undirected graph whose vertices are the positive integral solutions of \eqref{eq:classical-markoff}, with two solutions connected by an edge if they differ by a Vieta involution. In this sense, Markoff's theorem asserts that the graph is connected. Indeed, it is an infinite tree, known as the \emph{Markoff tree}.

Baragar \cite{Baragar1991} considered a local analogue of the connectivity theorem. To be precise, let $p\geq 5$ be a prime, and consider the normalized Markoff equation 
\begin{equation}
\label{eq:normalized-markoff}
x_1^2+x_2^2+x_3^2=x_1x_2x_3
\end{equation}
over the finite field $\FF_p$. Note that the scaling
\[
(x_1,x_2,x_3)\longmapsto(3x_1,3x_2,3x_3)
\]
identifies the solutions of \eqref{eq:classical-markoff} with those of \eqref{eq:normalized-markoff}.

Let $X_p$ be the set of solutions $(x_1,x_2,x_3)\neq(0,0,0)$ of the normalized Markoff equation. The associated Vieta involutions on $X_p$ are given by
\begin{equation}
\label{eqn:vieta_involution}
\begin{aligned}
m_1(x_1,x_2,x_3)&\coloneqq(x_2x_3-x_1,x_2,x_3),\\
m_2(x_1,x_2,x_3)&\coloneqq(x_1,x_1x_3-x_2,x_3),\\
m_3(x_1,x_2,x_3)&\coloneqq(x_1,x_2,x_1x_2-x_3).
\end{aligned}
\end{equation}
The \emph{Markoff graph} $G_p$ is an undirected graph whose vertex set is $X_p$ and whose edges connect two vertices $x,y\in X_p$ if $m_i(x)=y$ for some $i\in\{1,2,3\}$. For every prime $p\geq5$, Baragar \cite{Baragar1991} conjectured that the Markoff graph $G_p$ is also connected.

Bourgain, Gamburd, and Sarnak \cite{BGS2016note,BGS2026} made the first major progress toward this conjecture. They proved that, for every $\varepsilon>0$ and all sufficiently large primes $p$ depending on $\varepsilon$, the graph $G_p$ has a giant connected component \(\mathcal{C}_p\) in the sense that
\[
\bigl|X_p\setminus\mathcal{C}_p\bigr|<p^\varepsilon.
\]
This was followed by another breakthrough of Chen \cite{Chen2024}, who established that the cardinality of every connected component of $G_p$ is divisible by $p$. Together, these two results imply that $G_p$ is connected for every sufficiently large prime $p$. Recently, Martin \cite{Martin2025} gave a short alternative proof to Chen's divisibility theorem.

Eddy, Fuchs, Litman, Martin, and Tripeny \cite{EddyEtAl2025} further developed the approach of Bourgain, Gamburd, and Sarnak to show that Baragar's conjecture is true for all primes greater than $3.449\cdot10^{392}$. On the other hand, Brown \cite{Brown2025} developed a fast connectivity test and verified the conjecture for all primes in $[5, 10^6)$. Nevertheless, Baragar's full conjecture remains open. 

Beyond mere connectivity, Bourgain, Gamburd, and Sarnak \cite{BGS2026} raised the stronger question of whether the family of Markoff graphs
$
\{G_p:p\geq5\text{ prime}\}
$
forms an expander family. Important qualitative evidence for this question was provided by de Courcy-Ireland, who proved that $G_p$ is non-planar for every $p>7$ \cite{de2024non}. 
This expansion problem motivates us to study the robustness of connectivity in the Markoff graphs. More precisely, the main goal of this paper is to investigate the $2$-connectivity of the Markoff graphs. Here and throughout, by $2$-connectivity, we mean $2$-vertex-connectivity: the graph has at least three vertices and remains connected after deleting any one vertex.\footnote{By the classical Menger theorem, 2-connectivity is also equivalent to the property that the graph has at least three vertices and every two distinct vertices are joined by two paths sharing no other vertices.} 

Our main result is that every connected component of $G_p$ is 2-connected.

\begin{theorem}
\label{thm:markoff-components}
Let $p\geq5$ be a prime. Every non-loop edge of $G_p$ lies on a cycle, and moreover every connected component of $G_p$ is $2$-connected. In particular, if $G_p$ is connected, then it is $2$-connected.
\end{theorem}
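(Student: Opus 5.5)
We will prove the statement by first establishing the weaker claim that every non-loop edge lies on a cycle (so $G_p$ is bridgeless), and then upgrading this to the absence of cut vertices. We work directly with the rotation structure. For fixed $x_3=c$, the maps $m_1$ and $m_2$ preserve the conic
\[
x_1^2+x_2^2-cx_1x_2=-c^2,
\]
and their composition $\rho_c=m_2\circ m_1$ acts on this conic as a linear map of determinant $1$ and trace $c^2-2$. The orbits of $\rho_c$ therefore have a uniform length $n_c$, namely the order of the eigenvalue $\lambda$ with $\lambda+\lambda^{-1}=c^2-2$, which divides $p-1$, $p+1$, or equals $p$ in the parabolic case $c=\pm2$.

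An $m_1$-edge between distinct points $x\ne m_1(x)$ in the slice $x_3=c$ lies on the closed walk $x, m_1x, m_2m_1x,\dots$ alternating $m_1$ and $m_2$. This walk is a cycle of length $2n_c$ in the $\langle m_1,m_2\rangle$-orbit, which is a dihedral orbit. We must check when this dihedral orbit is a genuine cycle rather than a path, since a path occurs when the orbit contains fixed points of $m_1$ or $m_2$ at both ends.

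If the orbit is a path, we instead close the edge using the other slice: the points $x$ and $m_1x$ share the coordinates $(x_2,x_3)$, so they also lie in a common $\langle m_1,m_3\rangle$-orbit with fixed $x_2$. It then suffices to show that the edge is not a bridge in at least one of these two dihedral orbits. Failing that, we construct an explicit detour: move from $x$ along the $x_2$-slice and from $m_1x$ along the $x_3$-slice, and use the fact that orbits of length at least $3$ in different slices intersect at several points. The degenerate cases, namely small orbit lengths $n_c\le 2$ (which happen for $c\in\{0,\pm1,\pm\sqrt2,\pm\sqrt3\}$-type values) and points with a zero coordinate, will be handled by direct computation.

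The main step is excluding cut vertices. Suppose $v$ is a cut vertex of a component $C$. Because $v$ has degree at most $3$ and no neighbor edge is a bridge, the structure is constrained. Each edge at $v$ lies on a cycle through $v$, and these cycles are the rotation cycles in the slices through $v$. The neighbors $m_1v$ and $m_2v$ are connected in $C-v$ via the rest of the $\rho_{v_3}$-cycle, which avoids $v$ provided the cycle is genuine and $n_{v_3}\ge2$. Similarly, $m_1v$ and $m_3v$ are connected through the $x_2$-slice cycle. Hence all neighbors of $v$ lie in one component of $C-v$, and therefore $v$ is not a cut vertex.

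So the proof reduces to the same lemma as before: through each vertex $v$ and each pair $i\ne j$, the $\langle m_i,m_j\rangle$-orbit of $v$ yields a path in $C-v$ between $m_iv$ and $m_jv$, at least for enough pairs to link all neighbors. The hard part, which we expect to be the main obstacle, is the dihedral orbits that are paths rather than cycles, where $v$ sits at an interior point of the path. In this case we must use the third coordinate slice to reroute. We would classify the fixed points of $m_i$, namely points with $2x_i=x_jx_k$, show that a dihedral orbit containing two reflection-fixed points is short and explicit, and then verify that rerouting through the third slice works. If necessary, this can be phrased via the general $G_{p,k}$ with $k\ne4$ as in the abstract: the degenerate orbits correspond to singular or parabolic slices, and $k\ne4$ excludes the singular point $(2,2,2)$-type configurations.

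Finally, a component with at least three vertices is connected and has no cut vertex, so it is $2$-connected. Specializing to the case where $G_p$ is connected gives the last assertion.
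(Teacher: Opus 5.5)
You have a genuine gap. Your slice set-up is sound. For $c\neq 0$ the map $\rho_c$ has determinant $1$ and trace $c^2-2$, and all its orbits on the conic have the same length. In the parabolic and elliptic slices ($c^2-4$ zero or a non-square), $m_1$ and $m_2$ have no fixed points on the conic, so every $\langle m_1,m_2\rangle$-orbit is a genuine cycle and edges there are fine.

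But the case you postpone is the whole theorem. Take $e=\{x,m_1x\}$ with $x_3=c$, $x_2=c'$, such that both the $\langle m_1,m_2\rangle$-orbit in $\{x_3=c\}$ and the $\langle m_1,m_3\rangle$-orbit in $\{x_2=c'\}$ are paths. This requires $c^2-4$ and $c'^2-4$ to be nonzero squares, and nothing in your outline excludes it. Then $e$ is a bridge of both dihedral orbits.

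Your detour rests on the claim that orbits in different slices meet in several points, and this is false. The set $\{x_2=c'\}\cap\{x_3=c\}$ is cut out by $x_1^2-cc'x_1+c^2+c'^2=0$, so it consists of $x$ and $m_1x$ only; parallel slices are disjoint. Longer chains of slices can return to $\{x_3=c\}$, but nothing guarantees they land on the $m_1x$-side of $e$ in that path rather than the $x$-side. Likewise, nothing local prevents a vertex $v$ from lying in the interior of path-orbits in all three slices through it. Then every pair of neighbors of $v$ is separated inside every slice through $v$, and ``rerouting through the third slice'' has nothing to work with. Classifying reflection-fixed points does not touch this: whether $e$ lies on a cycle is a global property of the component, and your proposal contains no global argument.

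The cut-vertex step is not the main step. Once every non-loop edge lies on a cycle, a cut vertex $v$ of $C$ would send at least two edges to each component of $C-v$, since a single attaching edge would be a bridge. So $v$ would have at least four distinct non-loop neighbors, which is impossible; this is how the paper concludes. Everything therefore rests on bridgelessness.

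The missing global idea, as in the paper, is to pass to the quotient by the Klein group $K$ of double sign changes, which commutes with the $m_i$ and acts freely on $X_p$.
A non-bridge quotient edge lifts to an edge on a cycle: close a lifted path $P$ from $x$ to $\sigma(y)$ using $\sigma(e)$ and $\sigma(P)$.
Quotient loops come from cusps and lie on the cusp $4$-cycle.
For a quotient bridge, it suffices that $\sigma_1$ lies in $H_R(x)$, the set of $\sigma\in K$ with $\sigma(x)$ reachable from the endpoint $x$ inside the lift of each shore $R$. This is automatic when the shore contains a cusp. For a cusp-free shore it is forced by summing divergences of two flows over the shore:
\begin{enumerate}
\item the $K$-invariant $S_i$ give $x_2^2=x_3^2$;
\item the swap of the last two coordinates then leaves $H_R(x)\in\bigl\{\{1\},\{1,\sigma_1\},K\bigr\}$;
\item the twisted $W_i$, evaluated on a one-sheet lift, rule out $H_R(x)=\{1\}$, with the case $x_3^2=4$ handled by the $2p$-cycle through a cusp.
\end{enumerate}

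As a minor correction, $n_c\le 2$ happens only for $c=0$; the values $c^2=1,2,3$ give $n_c=3,4,6$.
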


Combining Theorem \ref{thm:markoff-components} with the results of Eddy, Fuchs, Litman, Martin, and Tripeny \cite{EddyEtAl2025} and Brown \cite{Brown2025} yields the $2$-connectivity of the Markoff graph $G_p$ for an explicit range of primes.

\begin{corollary}
\label{cor:large-p}
The Markoff graph $G_p$ is $2$-connected for every prime $p\in [5,10^6)\cup(3.449\cdot10^{392},+\infty)$.
\end{corollary}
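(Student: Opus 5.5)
The plan is to derive the corollary directly from Theorem \ref{thm:markoff-components}, which reduces $2$-connectivity of $G_p$ to plain connectivity. By the last sentence of that theorem, it suffices to show that $G_p$ is connected for every prime $p$ in the stated range. I would then take care separately of the vertex-count condition built into our definition of $2$-connectivity.

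For connectivity, I would split the range into two parts. For primes $p\in[5,10^6)$, I would cite Brown \cite{Brown2025}, whose connectivity test verified Baragar's conjecture for all such primes. For primes $p>3.449\cdot 10^{392}$, I would cite Eddy, Fuchs, Litman, Martin, and Tripeny \cite{EddyEtAl2025}, who proved that $G_p$ is connected in that range. Together these give connectivity of $G_p$ for every prime in $[5,10^6)\cup(3.449\cdot10^{392},+\infty)$.

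It remains to confirm that $G_p$ has at least three vertices, which the definition of $2$-connectivity requires. Theorem \ref{thm:markoff-components} already asserts $2$-connectivity of each component, so this is in principle covered there. Independently, I would note the classical count $|X_p|=p^2\pm3p$, the sign depending on $p \bmod 4$; this is at least $10$ for $p\ge5$. A more elementary check also works: $(3,3,3)$ lies in $X_p$, and applying $m_1$ and $m_2$ gives the solutions $(6,3,3)$ and $(3,6,3)$. For $p\ge5$ these three vertices are distinct, since $6\not\equiv3\pmod p$.

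There is no real obstacle: the corollary is a formal combination of Theorem \ref{thm:markoff-components} with the two cited connectivity results. The only point needing care is to use the versions of those results that state connectivity of $G_p$ itself, i.e.\ with the origin removed, and over exactly the stated prime ranges.
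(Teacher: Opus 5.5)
Your proposal is correct and matches the paper's own argument: the corollary is the final assertion of Theorem~\ref{thm:markoff-components} combined with the connectivity results of Brown for $p\in[5,10^6)$ and of Eddy, Fuchs, Litman, Martin, and Tripeny for $p>3.449\cdot10^{392}$. Your extra check that $G_p$ has at least three vertices (via $(3,3,3)$, $(6,3,3)$, $(3,6,3)$) is valid, though it is already implied by the theorem.
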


We next discuss the sharpness of the vertex-connectivity in Corollary~\ref{cor:large-p}. 
For $p=5$, one can easily verify that $G_5$ is $3$-connected; see \cite[Figure 13.1]{de2024non} for an illustration of $G_5$. By contrast, $G_p$ is not $3$-connected for every prime $p\geq7$. Indeed, \cite[Lemma 2.3]{cerbu2020cycle} shows that $G_p$ has $3(p-3)$ vertices with a loop when $p\equiv3\pmod4$, and $3(p-5)$ such vertices when $p\equiv1\pmod4$. Moreover, each vertex of $G_p$ has at most one loop. Hence every vertex has either two or three distinct neighbors, and for every prime $p\ge7$ there exists a vertex with exactly two distinct neighbors. Since every $3$-connected graph has minimum degree at least $3$, it follows that $G_p$ is not $3$-connected for any prime $p\geq 7$.

\medskip

The preceding results are in fact instances of a more general phenomenon.
Our method applies uniformly to the \emph{generalized Markoff equation}
\begin{equation}
\label{eq:generalized-markoff}
x_1^2+x_2^2+x_3^2=x_1x_2x_3+k,
\end{equation}
for $k\in\FF_p$. Let
\[
X_{p,k}\coloneqq
\left\{(x_1,x_2,x_3)\in\FF_p^3:
x_1^2+x_2^2+x_3^2=x_1x_2x_3+k
\right\}
\]
It is evident that the Vieta involutions $m_1,m_2,m_3$ defined in (\ref{eqn:vieta_involution}) preserve $X_{p,k}$ for any $k\in\FF_p$. 
We therefore define the \emph{generalized Markoff graph} $G_{p,k}$ to be the undirected graph with vertex set $X_{p,k}$ whose edges are given by the three Vieta involutions. 
Note that the original Markoff graph $G_p$ is obtained from $G_{p,0}$ by removing the origin $(0,0,0)$.
For general results on the generalized Markoff equation~\eqref{eq:generalized-markoff} and the associated graph $G_{p,k}$, we refer the interested reader to \cite{cerbu2020cycle, GS2022}.

\smallskip

We now turn to the structure of the connected components of the generalized Markoff graph $G_{p,k}$.
A vertex in $G_{p,k}$ is called \emph{\textbf{axial}} if at least two of its coordinates are zero.\footnote{We remark that axial vertices exist if and only if $k=t^2$. In this case, $(\pm t,0,0)$ and their permutations are axial.}
The following theorem gives a complete description of the connected components of $G_{p,k}$, distinguishing the exceptional components containing axial vertices from all others. In particular, it yields a uniform $2$-connectivity result for every component of order at least three whenever $k\in\FF_p\setminus\{4\}$.

\begin{theorem}
\label{thm:generalized-main}
Let $p\geq5$ be a prime and let $k\in\FF_p\setminus\{4\}$. Then the following statements hold.
\begin{enumerate}
\item If a connected component $C$ of $G_{p,k}$ contains an axial vertex, then there exists $t\in\FF_p$ with $t^2=k$ such that, after permuting the coordinates if necessary,
\[
V(C)=\{(t,0,0),(-t,0,0)\}.
\]
\item If a connected component $C$ of $G_{p,k}$ contains no axial vertex, then every non-loop edge of $C$ is contained in a cycle, and moreover $C$ is $2$-connected.
\end{enumerate}
\end{theorem}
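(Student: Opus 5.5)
The plan is to exploit the rotation structure. For a vertex $x=(x_1,x_2,x_3)$, fix the coordinate $x_1=a$. The compositions $m_2\circ m_3$ and $m_3\circ m_2$ preserve the conic
\[
\{(a,y,z): y^2+z^2-ayz = k-a^2\}.
\]
On this conic they act as a linear map $R_a:(y,z)\mapsto(z, az-y)$ of determinant $1$ and trace $a$. So the orbit of $(y,z)$ under $\langle m_2,m_3\rangle$ is a cycle in the graph, with vertices alternating along the edges $m_2,m_3$.

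\textbf{Part (1).} Take an axial vertex, say $(t,0,0)$ with $t^2=k$. Then $m_1$ sends it to $(-t,0,0)$, while $m_2$ and $m_3$ fix it, giving loops. The same holds for $(-t,0,0)$, so the component is exactly $\{(\pm t,0,0)\}$. Permuting coordinates covers the other cases.

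\textbf{Edges on cycles.} Let $e=\{x,m_3(x)\}$ with $m_3(x)\neq x$. I would look at the $\langle m_2,m_3\rangle$-orbit of $x$ in the plane $x_1=a$. Since $R_a$ has finite order $n$, the $2n$ alternating $m_2,m_3$ moves return to $x$, and the closed walk they trace is a genuine cycle unless the orbit degenerates.

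Degeneration happens when the orbit is a "path with loops at both ends", i.e. a dihedral orbit on which $m_2$ and $m_3$ each have a fixed point. In that case the walk traverses $e$ twice. I would therefore not rely on one plane but use the two planes through $e$, namely $x_1=a$ and $x_2=b$, both of which contain the edge $e$ (it changes only $x_3$).

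If both orbits are degenerate paths, I would combine them. Go along the $x_1$-plane path from $m_3(x)$ away from $x$ to some vertex, step by $m_1$ somewhere off the path, and return through the $x_2$-plane path. This requires a case analysis of which vertices are fixed by $m_i$. A vertex fixed by $m_3$ satisfies $2x_3=x_1x_2$, and fixed points of the others are similar. Degenerate orbits occur only when $R_a$ has small order or fixes special lines. The exclusion $k\neq4$ should remove the singular points, e.g. $(2,2,2)$-type vertices where $k=4$ makes the Cayley cubic singular, together with the collapsing behavior of parabolic $a=\pm2$.

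\textbf{2-connectivity.} Every non-loop edge lying on a cycle makes $C$ bridgeless, but I also need $C$ to have no cut vertex. I would show that for any vertex $v$, its at most three distinct neighbors lie in a common component of $C-v$. For two neighbors $m_i(v)$ and $m_j(v)$, the $\langle m_i,m_j\rangle$-orbit through $v$ is a cycle through both whenever it is non-degenerate. This joins them avoiding $v$.

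In the degenerate case, $v$ is an endpoint of a dihedral path, so only one of $m_i(v),m_j(v)$ differs from $v$, and there is nothing to join. The real issue is when the path passes through $v$ in its interior. In that case its two ends carry loops, and I would reconnect the two sides using the third involution applied at other vertices of the path, i.e. by moving into neighboring planes.

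Finally, $C$ has at least three vertices: in part (2) no vertex lies in a $2$-vertex component, since the only such configurations are axial. This also handles the statement for $G_p$, where the origin is an isolated axial-like vertex removed from $G_{p,0}$.

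\textbf{Main obstacle.} The hardest step is this degenerate case, where the rotation orbits in both planes through a vertex or edge are paths rather than cycles. I would first classify precisely when $R_a$-orbits on the conic are dihedral paths. This amounts to the conic having points with $2z=ay$ or $2y=az$ lying on the orbit, which happens when the orbit is reflection-invariant. I would then show that the planes $x_1=a$ and $x_2=b$ cannot both be degenerate in a way that isolates $v$, using an explicit computation that fails only when $k=4$ or at axial points.
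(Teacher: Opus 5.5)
Your Part (1) is correct and is the same computation as in the paper. The gap is in Part (2). Everything rests on the ``degenerate'' case, which you leave as a plan, and that case is not the rare exception you describe.

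(A small correction first: on the conic $x_1=a$ one has $m_3\circ m_2=R_a^2$, which has trace $a^2-2$. Your $R_a$ is the swap of the last two coordinates composed with $m_2$, so it is not an element of $\langle m_2,m_3\rangle$.)

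Take $a^2\neq 4$ and $a^2\neq k$. A point $(a,y,z)$ is fixed by $m_3$ iff $2z=ay$ and $y^2(4-a^2)=4(k-a^2)$, and the same criterion holds for $m_2$. So fixed points exist exactly when $(k-a^2)(4-a^2)$ is a nonzero square. In that case the two $\langle m_2,m_3\rangle$-orbits through the four fixed points are paths with loops at both ends. This happens for about half of the values of $a$, regardless of the order of the rotation. For example, with $p=7$ and $k=0$, the whole plane $x_1=1$ consists of the two paths $(1,1,4),(1,3,4),(1,3,6)$ and $(1,6,3),(1,4,3),(1,4,1)$.

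A label-$3$ edge lies only in the two planes $x_1=a$ and $x_2=b$, and nothing prevents both from being of path type. For that case your argument is ``step by $m_1$ somewhere off the path and return through the $x_2$-plane path.'' But after an $m_1$-step you are in an unrelated plane, and nothing guarantees a way back to the other path without crossing $e$. Whether $e$ is a bridge is a global property of the component. The promised ``explicit computation that fails only when $k=4$'' is never identified, and the same unresolved issue reappears in your cut-vertex analysis.

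The missing idea is a global obstruction to bridges. The paper passes to the quotient by the Klein group $K$ of double sign changes, and handles three cases:
\begin{enumerate}
\item A non-loop edge whose quotient is not a bridge lies on a cycle, obtained by lifting a detour and using $\sigma(e)$ and $\sigma(P)$.
\item Quotient loops come from cusps, which sit on $4$-cycles.
\item For a quotient bridge, shores containing a cusp have connected lifts. For a cusp-free shore, the paper sums the divergence of the flow $\overline{f}\mapsto S_i$ over the whole shore to force $x_2^2=x_3^2$ at the endpoint. It then uses the swap $\rho$ and a second, sign-twisted flow built from the $W_i$ on a one-sheet lift, with de Courcy-Ireland's $2p$-cycles handling $x_j=\pm 2$. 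This gives $\sigma_1\in H_A(u)\cap H_B(v)$, and hence the cycle $e\cup P_B\cup\sigma_1(e)\cup P_A$.
\end{enumerate}

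Separately, you need no analysis at all for cut vertices. Once every non-loop edge lies on a cycle, a cut vertex $v$ of $C$ would have at most three non-loop edges spread over at least two components of $C-v$. Some component would then be attached by a single edge, which would be a bridge.

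Finally, $|V(C)|\geq 3$ needs an argument, not just the assertion that two-vertex components are axial. The paper uses $Km_i(x)\neq Km_j(x)$ for regular $x$, and this is where $k\neq 4$ enters.
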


We make two remarks. First, Theorem~\ref{thm:markoff-components} follows immediately from Theorem~\ref{thm:generalized-main}.
Consider the case $k=0$. In this case, the origin $(0,0,0)$ is the unique axial vertex of $G_{p,0}$ and forms a singleton component. The original Markoff graph $G_p$ is obtained from $G_{p,0}$ by deleting the origin $(0,0,0)$, and hence every component of $G_p$ contains no axial vertices.
Theorem~\ref{thm:markoff-components} therefore follows from Theorem~\ref{thm:generalized-main}(2). 

Second, the condition $k\neq4$ is essential in Theorem~\ref{thm:generalized-main}. Indeed, if $k=4$, then $(2,1,1)$ is a solution of the generalized Markoff equation. Moreover, we have
\[
m_2(2,1,1)=m_3(2,1,1)=(2,1,1),
\qquad \mbox{and} \qquad
m_1(2,1,1)=(-1,1,1).
\]
This shows that $(2,1,1)$ has two loops and thus a unique distinct neighbor. 
Therefore, the connected component (say $C$) containing $(2,1,1)$ cannot be $2$-connected. 
On the other hand, this component $C$ clearly has more than two vertices, because the vertex
\(m_2(-1,1,1)=(-1,-2,1)\)
is distinct from both $(2,1,1)$ and $(-1,1,1)$, for any $p\geq5$. Putting everything together, we see Theorem~\ref{thm:generalized-main} cannot hold for the case $k=4$.

\medskip

The proof of Theorem~\ref{thm:generalized-main} brings together ideas from number theory and graph theory. On the number-theoretic side, it exploits the arithmetic of the generalized Markoff equation over $\mathbb{F}_p$ and its Vieta involutions; on the graph-theoretic side, it uses graph flows to control bridges in certain quotient graphs defined over $G_{p,k}$.\footnote{Recall that an edge of a graph is called a \emph{bridge} if deleting it increases the number of connected components.}
We refer the reader to Subsection~\ref{Subsec:overview} for an overview of the proof of the latter part of the argument, which constitutes the main technical contribution of this paper.

\medskip

The remainder of the paper is organized as follows. In Section \ref{sec:quotient}, we investigate the quotient graph $\overline{G}_{p,k}$, which captures the double sign-change symmetries of the generalized Markoff graph $G_{p,k}$. We also identify a distinguished class of quotient vertices, called \emph{cusps}, whose local connectivity and lifting properties play a key role in our proof.
In Section \ref{sec:bridges}, we introduce the notion of \emph{graph flows} and use it, together with other graph-theoretic and number-theoretic arguments, to analyze the \emph{bridges} of $\overline{G}_{p,k}$.
We then establish the central proposition concerning the lifting of these bridges, namely Proposition~\ref{prop:generalized-bridge}.
Finally, in Section \ref{sec:main_thm}, we complete the proof of Theorem~\ref{thm:generalized-main}.

\section{The quotient graph}
\label{sec:quotient}

Define the \emph{double sign-change automorphisms} $\sigma_1,\sigma_2,\sigma_3\colon X_{p,k}\to X_{p,k}$ by
\[
\begin{aligned}
\sigma_1(x_1,x_2,x_3)&\coloneqq(x_1,-x_2,-x_3),\\
\sigma_2(x_1,x_2,x_3)&\coloneqq(-x_1,x_2,-x_3),\\
\sigma_3(x_1,x_2,x_3)&\coloneqq(-x_1,-x_2,x_3).
\end{aligned}
\]
Set
\[
K\coloneqq\{1,\sigma_1,\sigma_2,\sigma_3\}.
\]
It is easy to verify the relations $\sigma_i^2=1$ for $i\in\{1,2,3\}$ and $\sigma_i\sigma_j=\sigma_j\sigma_i=\sigma_k$ for distinct $i,j,k\in\{1,2,3\}$. Thus we have a group isomorphism
$$
K\cong(\ZZ/2\ZZ)\times(\ZZ/2\ZZ).
$$
Moreover, every element of $K$ commutes with each of the Vieta involutions $m_1,m_2,m_3$.

Let $X_{p,k}^{\circ}$ be the set of non-axial vertices in $G_{p,k}$. The group $K$ acts freely on $X_{p,k}^{\circ}$. Let $G_{p,k}^{\circ}$ denote the subgraph of $G_{p,k}$ induced by $X_{p,k}^{\circ}$. Set
\[
Y_{p,k}\coloneqq K\backslash X_{p,k}^{\circ}=\{Kx: x\in X_{p,k}^{\circ}\},
\]
be the set of $K$-orbits in $X_{p,k}^{\circ}$, and let
\[
\pi_k\colon X_{p,k}^{\circ}\longrightarrow Y_{p,k}
\]
be the canonical projection. We now define the \emph{quotient graph} $\overline{G}_{p,k}$ as follows. 
Its vertex set is the set $Y_{p,k}$ of $K$-orbits, with two $K$-orbits $Kx,Ky\in Y_{p,k}$ adjacent if $m_i(x)=\sigma(y)$ for some $i\in\{1,2,3\}$ and $\sigma\in K$.
Since the group $K$ commutes with Vieta involutions, this is independent of the choice of representative. 
An edge (resp. vertex) of $\overline{G}_{p,k}$ is called a \emph{quotient edge} (resp. \emph{quotient vertex}).
In fact, if $Kx\sim Ky$ is a non-loop quotient edge in $\overline{G}_{p,k}$, then there is an induced matching of size $4$ between $Kx$ and $Ky$ in $G_{p,k}^\circ$, where each of $Kx$ and $Ky$ is viewed as a four-vertex subset of $X_{p,k}^{\circ}$. Moreover, all edges in this induced matching have the same ``label'', i.e., they are defined by the same Vieta involution $m_i$ (see Lemma~\ref{lem:generalized-labels}).

We now introduce a special kind of vertex in $G_{p,k}^{\circ}$ and $\overline{G}_{p,k}$ that plays an important role in our proofs. A vertex of $G_{p,k}^{\circ}$ is a \textbf{\emph{cusp}} if exactly one coordinate is zero. Since the axial vertices have already been removed, every remaining vertex in $G_{p,k}^{\circ}$, which we call \emph{regular}, has all three coordinates nonzero.
For a quotient vertex $Kx$ of $\overline{G}_{p,k}$, we call $Kx$ a \emph{cusp} (resp. a \emph{regular vertex}) if one, equivalently every, vertex in $Kx$ is a cusp (resp. a regular vertex) in $G_{p,k}^{\circ}$. This definition is independent of the choice of representative, because each automorphism of $K$ preserves the vanishing or non-vanishing properties of each coordinate.

\begin{figure}[H]
\centering
\begin{tikzpicture}[
vertex/.style={inner sep=2pt},
edge/.style={semithick},
edge label/.style={midway, sloped, above}
]
\node[vertex] (x)   at (0,1.5)  {$x$};
\node[vertex] (s3)  at (2.5,0)    {$\sigma_3(x)$};
\node[vertex] (s23) at (0,-1.5) {$\sigma_2\sigma_3(x)=\sigma_1(x)$};
\node[vertex] (s2)  at (-2.5,0)   {$\sigma_2(x)$};

\draw[edge] (x)   -- node[edge label] {$m_2$} (s3);
\draw[edge] (s3)  -- node[edge label] {$m_3$} (s23);
\draw[edge] (s23) -- node[edge label] {$m_2$} (s2);
\draw[edge] (s2)  -- node[edge label] {$m_3$} (x);
\end{tikzpicture}
\caption{The $4$-cycle in $G_{p,k}^{\circ}$ formed by the four vertices over a cusp $Kx$ of $\overline{G}_{p,k}$ with $x=(0,x_2,x_3)$.}
\label{fig:four-cycle}
\end{figure}

\begin{lemma}
\label{lem:cusp4cycle}
Let $Kx$ be a cusp in $\overline{G}_{p,k}$. Then the four vertices in $Kx$ form a $4$-cycle in $G_{p,k}^\circ$.
\end{lemma}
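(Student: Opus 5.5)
Without loss of generality, I will assume the zero coordinate is the first one, so $x=(0,x_2,x_3)$. Since $x$ is non-axial, $x_2x_3\neq 0$. The plan is to compute the Vieta involutions $m_2$ and $m_3$ on the four points of $Kx$ directly and check that they produce the cycle drawn in Figure~\ref{fig:four-cycle}.

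First, I list the orbit explicitly:
\[
x=(0,x_2,x_3),\quad \sigma_1(x)=(0,-x_2,-x_3),\quad \sigma_2(x)=(0,x_2,-x_3),\quad \sigma_3(x)=(0,-x_2,x_3).
\]
These four points are pairwise distinct because $x_2,x_3\neq0$ and $p$ is odd. Hence $Kx$ really has four elements, which is consistent with the freeness of the $K$-action on $X_{p,k}^\circ$.

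Next, I compute the involutions. For any point $(0,y_2,y_3)$ we have $m_2(0,y_2,y_3)=(0,-y_2,y_3)$ and $m_3(0,y_2,y_3)=(0,y_2,-y_3)$, because the product term vanishes when the first coordinate is $0$. In terms of the group $K$, this means $m_2$ acts on $Kx$ as $\sigma_3$ and $m_3$ acts on $Kx$ as $\sigma_2$. This yields the edges
\[
x\overset{m_2}{\sim}\sigma_3(x),\qquad \sigma_3(x)\overset{m_3}{\sim}\sigma_2\sigma_3(x)=\sigma_1(x),\qquad \sigma_1(x)\overset{m_2}{\sim}\sigma_3\sigma_1(x)=\sigma_2(x),\qquad \sigma_2(x)\overset{m_3}{\sim}x.
\]
These are exactly the four edges of Figure~\ref{fig:four-cycle}. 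Each edge joins two distinct vertices, since $\sigma_2$ and $\sigma_3$ act without fixed points on $Kx$.

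Finally, I check that this closed walk is a genuine $4$-cycle. Its four vertices are distinct, as shown above, and its four edges join distinct pairs of vertices. All four points lie in $X_{p,k}^\circ$, so the edges lie in the induced subgraph $G_{p,k}^\circ$. I do not expect any step to be a real obstacle. The only points needing care are that $x_2x_3\neq0$, which is what guarantees four distinct vertices and no degenerate edges, and that $p\ge 5$ is odd, so that $y\neq -y$ for nonzero $y$.
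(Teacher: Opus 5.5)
Your proposal is correct and follows essentially the same route as the paper: you reduce to $x=(0,x_2,x_3)$ with $x_2x_3\neq 0$, observe that $m_2$ and $m_3$ act on $Kx$ as $\sigma_3$ and $\sigma_2$ respectively, and read off the $4$-cycle $x,\sigma_3(x),\sigma_1(x),\sigma_2(x)$. Your explicit check that the four orbit points are pairwise distinct (using $x_2,x_3\neq0$ and $p$ odd) is a detail the paper leaves implicit.
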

\begin{proof}
Without loss of generality, we assume that $x=(0,x_2,x_3)$ with $x_2,x_3\neq0$. It is easy to verify that
$$
m_2(x)=(0,-x_2,x_3)=\sigma_3(x)
$$
and
$$
m_3(x)=(0,x_2,-x_3)=\sigma_2(x).
$$
Thus, we have
$$
\sigma_2\sigma_3(x)=\sigma_2m_2(x)=m_2\sigma_2(x),
$$
and
$$
\sigma_2\sigma_3(x)=\sigma_3\sigma_2(x)=\sigma_3m_3(x)=m_3\sigma_3(x).
$$
Since $\sigma_1=\sigma_2\sigma_3=\sigma_3\sigma_2$, we obtain the $4$-cycle in Figure \ref{fig:four-cycle} consisting of the four vertices in $Kx$.
\end{proof}

Next we prove a lifting property of connected subgraphs containing cusps.

\begin{lemma}
\label{lem:cuspconnected}
Let $\pi:=\pi_k$.
If a connected subgraph $H\subseteq\overline{G}_{p,k}$ contains a cusp $Kc$, then the subgraph of $G_{p,k}^\circ$ induced by $\pi^{-1}(V(H))$, denoted by $G_{p,k}^\circ[\pi^{-1}(V(H))]$, is connected.
\end{lemma}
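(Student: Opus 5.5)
We argue by a lifting argument along paths in $H$, using the cusp fiber as an anchor. Write $\widetilde H\coloneqq G_{p,k}^\circ[\pi^{-1}(V(H))]$. By Lemma~\ref{lem:cusp4cycle}, the fiber $\pi^{-1}(Kc)=Kc$ is a $4$-cycle in $G_{p,k}^\circ$, so its four vertices lie in a single connected component $D$ of $\widetilde H$. It therefore suffices to show that every vertex $y\in\pi^{-1}(V(H))$ lies in $D$.

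Fix such a $y$. Since $H$ is connected, there is a path $Ky=Kz_0,Kz_1,\dots,Kz_r=Kc$ in $H$. We lift it step by step starting from $z_0\coloneqq y$. Suppose $z_j$ has been chosen in the orbit $Kz_j$. The quotient edge $Kz_j\sim Kz_{j+1}$ means, by definition, that $m_i(x)=\sigma(x')$ for some representatives $x\in Kz_j$, $x'\in Kz_{j+1}$, some $i$ and some $\sigma\in K$. Write $z_j=\tau(x)$ with $\tau\in K$. Because $K$ commutes with $m_i$, we get $m_i(z_j)=\tau m_i(x)=\tau\sigma(x')\in Kz_{j+1}$. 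Set $z_{j+1}\coloneqq m_i(z_j)$.

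Then $z_j$ and $z_{j+1}$ are adjacent in $G_{p,k}$. Both are non-axial, since they lie in $X_{p,k}^\circ$, and both lie in $\pi^{-1}(V(H))$. Hence the edge $z_jz_{j+1}$ is present in the induced subgraph $\widetilde H$. (If $z_{j+1}=z_j$, this is a loop, which is harmless.) After $r$ steps we obtain a walk in $\widetilde H$ from $y$ to some $z_r\in Kc$. Since $Kc\subseteq D$, we conclude $y\in D$, and so $\widetilde H=D$ is connected.

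The only point needing care is that every quotient edge lifts to an actual edge starting at \emph{any} prescribed vertex of the source fiber, not merely at some vertex. This is exactly the commutation of $K$ with the $m_i$ noted before the definition of $\overline{G}_{p,k}$. Everything else is bookkeeping. The essential role of the cusp is that the lift only reaches one vertex of the target fiber, and the $4$-cycle of Lemma~\ref{lem:cusp4cycle} is what connects all four lifts together.
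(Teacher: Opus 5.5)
Your proposal is correct and follows the same route as the paper: you lift a path from $Ky$ to the cusp $Kc$ starting at $y$, then use the $4$-cycle of Lemma~\ref{lem:cusp4cycle} to connect the four vertices of $Kc$. You also spell out why a quotient edge can be lifted from any prescribed vertex of its source fiber, namely because $K$ commutes with the $m_i$; the paper uses this point without stating it.
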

\begin{proof}
Take any vertex $x$ in the induced subgraph $G_{p,k}^\circ[\pi^{-1}(V(H))]$. By definition, we have $Kx\in V(H)$. Since $H$ is connected, we choose a path in $H$ from $Kx$ to the cusp $Kc$, and lift it to a path in $G_{p,k}^\circ$ from $x$ to $\sigma(c)$ for some $\sigma\in K$. Thus, every vertex in the induced subgraph $G_{p,k}^\circ[\pi^{-1}(V(H))]$ is connected to one vertex of $Kc$ in $G_{p,k}^\circ$. Now, by applying Lemma \ref{lem:cusp4cycle}, the four vertices of $Kc$ form a 4-cycle in $G_{p,k}^\circ$, in particular, a connected subgraph of $G_{p,k}^\circ$. It follows that any two vertices of $G_{p,k}^\circ[\pi^{-1}(V(H))]$ are connected to each other via the $4$-cycle. Hence, we see that the induced subgraph $G_{p,k}^\circ[\pi^{-1}(V(H))]$ is connected.
\end{proof}

For convenience, we label each edge of the generalized Markoff graph $G_{p,k}$ by $i\in \{1,2,3\}$ if it is defined by the Vieta involution $m_i$. In the next lemma, we show that these labels are inherited unambiguously by every quotient edge that is either a non-loop or incident to a regular vertex of the quotient graph $\overline{G}_{p,k}$, except when $k=4$.

\begin{lemma}
\label{lem:generalized-labels}
Assume that $k\in\FF_p\setminus\{4\}$ and consider a vertex $x$ in $G_{p,k}^{\circ}$. Then we have the following.
\begin{itemize}
\item[(1)] If $x$ is regular, then $Km_i(x)\neq Km_j(x)$ for distinct $i,j\in\{1,2,3\}$.
\item[(2)] If $x$ is regular and $Km_i(x)=Kx$ for some $i\in\{1,2,3\}$, then $m_i(x)=x$.
\item[(3)] If $x=(x_1,x_2,x_3)$ is a cusp with $x_i=0$, then $Kx$ has a unique non-loop quotient edge, which is the projection of the label-$i$ edge incident to $x$.
\end{itemize}
Consequently, every quotient edge of $\overline{G}_{p,k}$ that is either a non-loop or incident to a regular vertex of $\overline{G}_{p,k}$ has a unique label.
\end{lemma}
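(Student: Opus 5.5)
The plan is a direct coordinate computation, with the three parts handled separately and the final assertion then deduced from them. The key observation throughout is that every $\sigma\in K$ preserves which coordinates vanish and acts on each coordinate by a sign. So an equality of the form $m_i(x)=\sigma(y)$ can be tested coordinatewise, and the coordinates not touched by $m_i$ pin down $\sigma$.

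For (1), by symmetry I would take $i=1$, $j=2$. Suppose $m_1(x)=\sigma(m_2(x))$. Both vectors have third coordinate $x_3\neq0$, so $\sigma\in\{1,\sigma_3\}$.
\begin{itemize}
\item If $\sigma=\sigma_3$, comparing first coordinates gives $x_2x_3-x_1=-x_1$. Hence $x_2x_3=0$, contradicting regularity.
\item If $\sigma=1$, we get $2x_1=x_2x_3$ and $2x_2=x_1x_3$. Substituting yields $4x_2=x_2x_3^2$, so $x_3^2=4$. Then $x_3=2$ gives $x_1=x_2$, and $x_3=-2$ gives $x_1=-x_2$.
\end{itemize}
In either subcase, substituting into $x_1^2+x_2^2+x_3^2=x_1x_2x_3+k$ gives $2x_1^2+4=2x_1^2+k$, i.e.\ $k=4$, which is excluded. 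This short computation is where the hypothesis $k\neq4$ enters. I expect it to be the only real obstacle, and it is minor.

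For (2), take $i=1$ and suppose $m_1(x)=\sigma(x)$. The map $m_1$ fixes the nonzero coordinates $x_2,x_3$, and the only element of $K$ fixing both of them is $1$ (recall $p\ge5$, so $x\neq -x$ for $x\neq0$). Hence $\sigma=1$ and $m_1(x)=x$.

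For (3), take $x=(0,x_2,x_3)$ with $x_2,x_3\neq0$.
\begin{itemize}
\item By the computation in Lemma~\ref{lem:cusp4cycle}, $m_2(x)=\sigma_3(x)$ and $m_3(x)=\sigma_2(x)$, so labels $2$ and $3$ project to loops at $Kx$.
\item Every quotient edge at $Kx$ arises from some $m_j(\sigma x)=\sigma m_j(x)$. So the quotient neighbours of $Kx$ are exactly $Km_1(x)$, $Km_2(x)$ and $Km_3(x)$.
\item Now $m_1(x)=(x_2x_3,x_2,x_3)$ has all coordinates nonzero. It therefore lies in $X^\circ_{p,k}$, and it is not in $Kx$, since all elements of $Kx$ have first coordinate $0$.
\end{itemize}
Hence the unique non-loop quotient edge at $Kx$ is the projection of the label-$1$ edge.

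Finally, for the consequence, suppose a quotient edge $Kx\sim Ky$ carries two labels $i\neq j$. Then $Km_i(x)=Ky=Km_j(x)$. I would split into cases according to the endpoints.
\begin{itemize}
\item If $x$ or $y$ is regular, choose that endpoint as $x$. This contradicts (1).
\item If both endpoints are cusps, the edge is a non-loop by assumption. By (3), the only non-loop quotient edge at the cusp $Kx$ comes from the single label $i$ with $x_i=0$, so it cannot also carry label $j$.
\end{itemize}
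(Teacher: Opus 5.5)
Your proposal is correct and follows essentially the same route as the paper: a coordinate comparison pins down $\sigma$ in (1) and (2), with $k\neq 4$ entering only through the $\sigma=1$ subcase of (1), and the loops and the single non-loop edge at a cusp are identified in (3). Your explicit case split for the final assertion (a regular endpoint is handled by (1), two cusp endpoints by (3)) simply spells out what the paper calls immediate.
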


\begin{proof}
Write $x=(x_1,x_2,x_3)$. 
For part (1), without loss of generality, we may assume for a contradiction that for some regular $x$, 
\[
Km_1(x)=Km_2(x).
\]
Then there exists some $\kappa\in K$ such that
\[
m_1(x)=\kappa m_2(x).
\]
If $\kappa\in \{\sigma_1, \sigma_2\}$, comparison of the third coordinates forces $x_3=0$, a contradiction; if $\kappa=\sigma_3$, comparison of the first two coordinates gives $x_2x_3=x_1x_3=0$, again a contradiction. Hence we have $\kappa=1$. Comparing the first two coordinates gives
\[
x_2x_3=2x_1
\qquad\text{and}\qquad
x_1x_3=2x_2.
\]
Since $x$ is regular, it follows that $x_3^2=4$. Write $x_3=2\varepsilon$ with $\varepsilon\in\{\pm1\}$, and obtain $x_1=\varepsilon x_2$. Substitution into the generalized Markoff equation gives
\[
k=x_1^2+x_2^2+x_3^2-x_1x_2x_3=(\varepsilon x_2)^2+x_2^2+(2\varepsilon)^2-(\varepsilon x_2)\cdot x_2\cdot (2\varepsilon)=4.
\]
This contradicts to our assumption. This proves part (1).

For part (2), without loss of generality, we can assume that $i=1$, i.e., $m_1(x)=\kappa x$ for some $\kappa\in K$. Direct comparison shows that every non-identity choice of $\kappa$ forces $x_2=0$ or $x_3=0$. Since $x$ is regular, we must have $\kappa=1$, and hence $m_1(x)=x$. This proves part (2).

For part (3), without loss of generality, we assume that $x=(0,x_2,x_3)$ is a cusp with $x_2x_3\neq0$. Then we have $m_2(x)=\sigma_3(x)$ and $m_3(x)=\sigma_2(x)$, so labels $2$ and $3$ give rise to two quotient loops. Since $x_2x_3\neq0$, we see $m_1(x)$ is a regular vertex. 
Thus the edge connecting $x$ and $m_1(x)$ is a non-loop with label $1$, which gives rise to a non-loop quotient edge. This proves part (3).

The last assertion follows from parts (1) and (3) immediately.
\end{proof}

We end this section with the following lemma, concerning all connected components of the generalized Markoff graph $G_{p,k}$ that contain an axial vertex. This proves Theorem \ref{thm:generalized-main}(1).

\begin{lemma}
\label{lem:small-components}
Assume that $k\in\FF_p\setminus\{4\}$. Every connected component of $G_{p,k}$ containing an axial vertex, after permuting the coordinates if necessary, is of the form
\[
\{(t,0,0),(-t,0,0)\},
\]
with $t\in\FF_p$ such that $t^2=k$. Every other connected component of $G_{p,k}$ is contained in $G_{p,k}^{\circ}$ and has at least $3$ vertices. 
\end{lemma}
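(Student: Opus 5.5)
We compute the neighbours of an axial vertex directly. After permuting coordinates, such a vertex is $a=(t,0,0)$ with $t^2=k$. Then
\[
m_1(a)=(-t,0,0),\qquad m_2(a)=(t,0,0)=a,\qquad m_3(a)=(t,0,0)=a .
\]
Applying the same computation to $(-t,0,0)$ gives the neighbours $(t,0,0)$ and two loops. Hence the set $\{(t,0,0),(-t,0,0)\}$ is closed under all three Vieta involutions, and it is a connected component. If $t=0$, which happens exactly when $k=0$, this component is the singleton $\{(0,0,0)\}$; the statement's form still holds, with the two listed points coinciding. This settles the first claim. It also shows that every axial vertex lies in one of these small components. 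Consequently any other component contains no axial vertex and is therefore contained in $G_{p,k}^{\circ}$.

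It remains to show that a component $C\subseteq G_{p,k}^{\circ}$ has at least $3$ vertices. We argue by contradiction and suppose $|V(C)|\le 2$.

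\emph{Case 1: $C$ contains a cusp $x$.} By Lemma~\ref{lem:cusp4cycle}, the four vertices of $Kx$ form a $4$-cycle in $G_{p,k}^\circ$. This gives $4$ distinct vertices in $C$, a contradiction.

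\emph{Case 2: every vertex of $C$ is regular.} Take $x\in C$. Since $C$ has at most two vertices, each $m_i(x)$ lies in a set of size at most $2$. I will consider two subcases.

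First, suppose all three $m_i(x)$ equal $x$. Then $x_2x_3=2x_1$, $x_1x_3=2x_2$ and $x_1x_2=2x_3$. As in the proof of Lemma~\ref{lem:generalized-labels}(1), the first two equations give $x_3^2=4$ and $x_1=\varepsilon x_2$ for some $\varepsilon\in\{\pm1\}$, and substituting forces $k=4$. This is excluded.

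Second, suppose some $m_i(x)$ equals $x$ and some $m_j(x)$ does not, or that two different $m_i(x)$ coincide with the other vertex $y\neq x$. The latter is impossible: $m_i(x)=m_j(x)$ for $i\neq j$ implies $Km_i(x)=Km_j(x)$, which contradicts Lemma~\ref{lem:generalized-labels}(1). So at most one $m_i$ sends $x$ to $y$, and the other two fix $x$. Say $m_j(x)=x$ and $m_l(x)=x$ with $j\ne l$. Then $m_j(x)=m_l(x)$, again contradicting Lemma~\ref{lem:generalized-labels}(1).

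In every subcase we reach a contradiction, so $|V(C)|\ge 3$.

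The step needing the most care is the bookkeeping in Case 2, namely checking that Lemma~\ref{lem:generalized-labels}(1) rules out every coincidence pattern among the three images. Stated cleanly: a regular vertex $x$ has three pairwise distinct images $m_1(x),m_2(x),m_3(x)$, so $C$ contains at least $3$ vertices. Since $x$ is among these images only when some $m_i$ fixes it, the three images themselves already supply three distinct vertices of $C$.
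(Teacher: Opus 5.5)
Your proof is correct and follows essentially the same route as the paper. You use the same direct computation of $m_1,m_2,m_3$ at $(t,0,0)$, and for the remaining components you observe that a regular $x$ has three pairwise distinct images by Lemma~\ref{lem:generalized-labels}(1), while a cusp supplies distinct vertices from its own $K$-orbit. (For the cusp you cite the $4$-cycle of Lemma~\ref{lem:cusp4cycle}; the paper instead lists $x,m_2(x),m_3(x)$ directly.) The contradiction framing and subcase bookkeeping in your Case~2 are dispensable, since the pigeonhole observation plus Lemma~\ref{lem:generalized-labels}(1) that you state at the end is already the whole argument.
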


\begin{proof}
After permuting coordinates, we assume that the axial vertex is $(t,0,0)$. The generalized Markoff equation gives $t^2=k$, and
\[
m_1(t,0,0)=(-t,0,0),
\qquad
m_2(t,0,0)=m_3(t,0,0)=(t,0,0).
\]
Thus, the connected component of $G_{p,k}$ containing this axial vertex is of the form
\[
\{(t,0,0),(-t,0,0)\}.
\]

Let $C$ be a connected component without an axial vertex, and choose any $x\in V(C)$. Suppose first that $x$ is a cusp. Without loss of generality, assume that $x=(0,x_2,x_3)$ with $x_2x_3\neq0$. Then, since $p$ is odd,
it is evident that $x, m_2(x)=(0,-x_2,x_3)$ and $m_3(x)=(0,x_2,-x_3)$ are three distinct vertices of $C$.

Suppose now that $x$ is regular. By Lemma \ref{lem:generalized-labels}(1), the three $K$-orbits $Km_1(x),\ Km_2(x)$ and $Km_3(x)$ are pairwise distinct. In particular, this implies that $m_1(x),m_2(x)$ and $m_3(x)$ are three distinct vertices in $C$. In either case, we have $|V(C)|\geq3$.
\end{proof}

\section{Lifting quotient bridges}
\label{sec:bridges}

The aim of this section is to prove the following key lifting proposition for quotient bridges. For an edge $e=xy$ of $G_{p,k}^{\circ}$, let $\overline{e}$ denote the corresponding quotient edge in $\overline{G}_{p,k}$ joining quotient vertices $Kx$ and $Ky$.

\begin{proposition}
\label{prop:generalized-bridge}
Assume that $k\in\FF_p\setminus\{4\}$. Let $e$ be a non-loop edge of $G_{p,k}^{\circ}$. If the quotient edge $\overline{e}$ is a bridge of $\overline{G}_{p,k}$, then $e$ is contained in a cycle of $G_{p,k}^{\circ}$.
\end{proposition}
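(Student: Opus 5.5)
Removing the bridge $\overline{e}$ from $\overline{G}_{p,k}$ splits the connected component of $\overline{G}_{p,k}$ containing $\overline{e}$ into two connected pieces $H_1\ni Kx$ and $H_2\ni Ky$, where $e=xy$. By Lemma~\ref{lem:generalized-labels}, $\overline{e}$ carries a unique label $i$. Over $\overline{e}$ there are exactly four lifted edges $\sigma(x)\sigma(y)$, $\sigma\in K$, all with label $i$, forming an induced matching between $Kx$ and $Ky$. The plan is to show that at least one of the two lifted pieces $G_{p,k}^\circ[\pi^{-1}(V(H_1))]$ and $G_{p,k}^\circ[\pi^{-1}(V(H_2))]$ is connected. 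If, say, the first is connected, then $x$ and $\sigma(x)$ (for $\sigma\neq 1$) are joined by a path avoiding $\pi^{-1}(V(H_2))$. Also $y$ and $\sigma(y)$ are joined inside the second piece whenever they lie in the same lifted component there. Combining the path from $x$ to $\sigma(x)$, the edge $\sigma(x)\sigma(y)$, a path from $\sigma(y)$ back to $y$, and the edge $yx$ gives the desired cycle through $e$. It therefore suffices to show that, for some $\sigma\neq1$, $x$ is connected to $\sigma(x)$ within the lift of $H_1$ and $y$ is connected to $\sigma(y)$ within the lift of $H_2$.

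To organize this, I would describe the lift of each $H_j$ by a monodromy, or voltage, subgroup. Fix a spanning tree of $H_j$ and lift it from the base vertex. Each non-tree quotient edge (including quotient loops) then determines an element of $K$, and these elements generate a subgroup $M_j\le K$. The lifted piece has $[K:M_j]$ components, and $x$ is connected to $\sigma(x)$ within the lift exactly when $\sigma\in M_j$. The goal becomes: $M_1\cap M_2\neq\{1\}$. Lemma~\ref{lem:cuspconnected} says $M_j=K$ whenever $H_j$ contains a cusp, so if both sides contain cusps we are done. Moreover, a regular vertex with a loop $m_i(z)=z$ contributes nothing, by Lemma~\ref{lem:generalized-labels}(2). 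Hence the real work is the case where some side, say $H_2$, contains no cusp.

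For that case I would use the paper's announced tool of graph flows together with a parity count. The plan is to assign to each quotient edge its label and its voltage, and to sum a $K$-valued (or $\mathbb{F}_2$-valued) ``flow'' around $H_2$. Every regular quotient vertex has degree exactly $3$, with distinct labels by Lemma~\ref{lem:generalized-labels}(1). Consider the subgraph $H_2\cup\{\overline{e}\}$, in which only $Ky$ is incident to the bridge. Summing the local sign data at each vertex (the product over the three incident edges) and using that interior edges are counted twice should force the voltage contributed by the cycles of $H_2$ to be nontrivial and related to the bridge label $i$. I would aim for a concrete identity of the form: the product of all cycle voltages in $H_2$ equals a specific nontrivial element, such as $\sigma_i$, determined by the bridge label. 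This would put $\sigma_i\in M_2$. Doing the same on $H_1$, or using a cusp there, would give $\sigma_i\in M_1$, and then $\sigma=\sigma_i$ works.

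I expect the main obstacle to be this flow/voltage identity. One difficulty is pinning down a well-defined, representative-independent local invariant at each regular vertex whose product over a closed finite piece is forced. Another is handling the degenerate cases, namely regular loops and $H_2$ being a tree. The tree case must be excluded separately, presumably by finiteness: a finite cubic-like piece with only one pendant bridge and no cusps cannot be a tree, since leaves would require loops, and loops are fixed points with special number-theoretic constraints. I would try first to prove that, in any cusp-free piece, the three involutions lifted along the piece generate at least the subgroup $\langle\sigma_i\rangle$, and I would check the formula on small $p$ if the bookkeeping becomes unclear.
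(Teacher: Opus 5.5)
Your reduction is correct and matches the paper's: your monodromy group $M_j$ is the paper's $H_R(x)$, and it suffices to find $\sigma\neq1$ in $H_A(u)\cap H_B(v)$. A shore containing a cusp gives all of $K$ by Lemma~\ref{lem:cuspconnected}. The gap is the cusp-free case, which is the entire content of the proposition. The mechanism you propose there, a $K$- or $\FF_2$-valued parity count built from labels and voltages, cannot work. Over a cusp-free shore $H_2$, the lift is a $K$-cover. Every feature you list is also satisfied by the trivial cover, i.e.\ four disjoint labelled copies of $H_2$ with all voltages equal to $1$. Those features are: cubic quotient vertices, distinct labels at a regular vertex, trivial voltage on regular loops, and $H_2$ not being a tree. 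Hence no bookkeeping of this combinatorial data can force $\sigma_i\in M_2$. Relatedly, edge voltages are only defined up to coboundary, so the representative-independent local sign invariant you hope for does not exist at the combinatorial level. Ruling out the tree case only shows that $H_2$ has cycles, not that their voltages are nontrivial. The obstruction must come from the coordinates of the solutions, and $k\neq4$ has to enter again beyond Lemma~\ref{lem:generalized-labels}.

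The paper supplies this through $\FF_p$-valued flows built from explicit rational functions, in three steps.
\begin{enumerate}
\item The $K$-invariant functions $S_i$ of Lemma~\ref{lem:first-flow} satisfy $S_i\circ m_i=-S_i$ and $S_1+S_2+S_3=0$ at regular points. They therefore define a flow on the shore together with the bridge that has zero divergence at every vertex. Summing over the shore forces $S_1(x)=0$, i.e.\ $x_2^2=x_3^2$ (Lemma~\ref{lem:diagonal-bridge}).
\item After a sign normalization, the coordinate swap $\rho$ fixes $x$ and conjugates $\sigma_2$ to $\sigma_3$. This gives $H_R(x)\in\{\{1\},\{1,\sigma_1\},K\}$ (Lemma~\ref{lem:diagonal-symmetry}).
\item The case $H_R(x)=\{1\}$ is excluded as follows. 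A one-sheet lift provides canonical representatives. The functions $W_i$ of Lemma~\ref{lem:twisted-flow} are not $K$-invariant (they transform by a nontrivial character), but on these representatives they still define a flow. This forces $W_1(x)=0$, contradicting $\Delta_1(x)x_3\neq0$.
\end{enumerate}
The poles $x_1^2=4$ and $x_3^2=4$ of the $W_i$ are handled by the alternating $2p$-cycle through a cusp of Lemma~\ref{lem:pole-cycle}, which again uses $k\neq4$. Your guess that the relevant element is $\sigma_i$ is right. However, without these arithmetic flows, or some substitute that genuinely uses the Markoff equation, the cusp-free case remains unproved.
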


\begin{figure}[t]
  \centering
  \begin{tikzpicture}[
      x=.75cm,
      y=.75cm,
      font=\small,
      line cap=round,
      line join=round,
      a region/.style={draw=cyan!55, fill=cyan!55, fill opacity=.22, line width=.70pt},
      b region/.style={draw=orange!65, fill=orange!65, fill opacity=.22, line width=.70pt},
      preimage frame/.style={draw=black, dashed, fill=none, line width=.70pt},
      edge/.style={draw=black, line width=.85pt},
      edge vertex/.style={circle, draw=black, fill=white, line width=.75pt, inner sep=1.35pt},
      connecting path/.style={draw=black, line width=1.10pt
    },
      hollow arrow/.style={draw=black, fill=none, line width=.70pt, line cap=butt, line join=miter},
      graph arrow/.style={ ->, draw=black, line width=.75pt}
    ]

    \def\componentoutline{%
      (-1.68,.52)
      .. controls (-1.76,.25) and (-1.74,-.28) .. (-1.50,-.51)
      .. controls (-1.15,-.73) and (-.72,-.63) .. (-.39,-.68)
      .. controls (-.02,-.74) and (.20,-.56) .. (.58,-.63)
      .. controls (.96,-.70) and (1.35,-.55) .. (1.52,-.29)
      .. controls (1.68,-.08) and (1.72,.25) .. (1.54,.47)
      .. controls (1.35,.69) and (.92,.60) .. (.88,.68)
      .. controls (.22,.77) and (-.05,.60) .. (-.42,.68)
      .. controls (-.82,.76) and (-1.20,.64) .. (-1.68,.52)
      -- cycle
    }

    \begin{scope}[shift={(-6.25,0)}]
      \path[a region]
        (-2.15,.10) ellipse [x radius=1.45,y radius=.90];
      \path[b region]
        ( 2.15,.10) ellipse [x radius=1.45,y radius=.90];

      \coordinate (ubar) at (-1.35,.10);
      \coordinate (vbar) at ( 1.35,.10);
      \draw[edge]
        (ubar) -- node[above=1pt,fill=white,inner sep=1pt]
        {$\overline{e}$} (vbar);
      \node[edge vertex] at (ubar) {};
      \node[edge vertex] at (vbar) {};

      \node[left=1pt]  at (ubar) {$Ku$};
      \node[right=1pt] at (vbar) {$Kv$};
      \node at (-2.15,-1.10) {$A$};
      \node at ( 2.15,-1.10) {$B$};
    \end{scope}

    \begin{scope}[shift={(6.00,0)}]
      \path[preimage frame] (-5.00,-2.10) rectangle (-.55,2.10);
      \path[preimage frame] ( .55,-2.10) rectangle ( 5.00,2.10);

      \begin{scope}[shift={(-2.75,.88)},xscale=1.15,yscale=1.24]
        \path[a region] \componentoutline;
      \end{scope}
      \begin{scope}[shift={(-2.75,-.98)},xscale=1.15,yscale=1.24]
        \path[a region] \componentoutline;
      \end{scope}
      \begin{scope}[shift={(2.75,.88)},xscale=1.15,yscale=1.24]
        \path[b region] \componentoutline;
      \end{scope}
      \begin{scope}[shift={(2.75,-.98)},xscale=1.15,yscale=1.24]
        \path[b region] \componentoutline;
      \end{scope}

      \coordinate (u)      at (-1.35, 1.30);
      \coordinate (v)      at ( 1.35, 1.30);
      \coordinate (su)     at (-1.35,  .40);
      \coordinate (sv)     at ( 1.35,  .40);
      \coordinate (aThree) at (-1.35, -.50);
      \coordinate (bThree) at ( 1.35, -.50);
      \coordinate (aFour)  at (-1.35,-1.40);
      \coordinate (bFour)  at ( 1.35,-1.40);

      \draw[edge]
        (u) -- node[above=1pt,fill=white,inner sep=1pt] {$e$} (v);
      \draw[edge]
        (su) -- node[above=1pt,fill=white,inner sep=1pt]
        {$\sigma(e)$} (sv);
      \draw[edge] (aThree) -- (bThree);
      \draw[edge] (aFour)  -- (bFour);

      \draw[connecting path]
        (u)
        .. controls (-1.70,1.32) and (-1.75,1.53) .. (-2.13,1.49)
        .. controls (-2.50,1.45) and (-2.43,1.17) .. (-2.80,1.15)
        .. controls (-3.18,1.13) and (-3.14,1.40) .. (-3.51,1.35)
        .. controls (-3.92,1.30) and (-3.99,.96) .. (-3.65,.84)
        .. controls (-3.34,.73) and (-3.46,.48) .. (-3.12,.45)
        .. controls (-2.78,.42) and (-2.74,.69) .. (-2.40,.65)
        .. controls (-2.05,.61) and (-2.04,.35) .. (-1.70,.31)
        .. controls (-1.52,.29) and (-1.47,.36) .. (su);
      \draw[connecting path]
        (v)
        .. controls (1.64,1.31) and (1.78,1.49) .. (2.10,1.42)
        .. controls (2.42,1.35) and (2.33,1.08) .. (2.70,1.05)
        .. controls (3.03,1.02) and (3.15,1.27) .. (3.53,1.19)
        .. controls (3.90,1.11) and (4.05,.79) .. (3.72,.66)
        .. controls (3.41,.54) and (3.52,.31) .. (3.15,.30)
        .. controls (2.78,.29) and (2.70,.56) .. (2.35,.52)
        .. controls (1.96,.48) and (1.82,.32) .. (sv);

      \foreach \pt in {u,v,su,sv,aThree,bThree,aFour,bFour}
        \node[edge vertex] at (\pt) {};

      \node[above=0pt] at (u)  {$u$};
      \node[above=0pt]  at (v)  {$v$};
      \node[above=0pt] at (su) {$\sigma(u)$};
      \node[above=0pt]  at (sv) {$\sigma(v)$};
      \node at (-4.10,.55) {$P_A$};
      \node at ( 4.20,1.35) {$P_B$};

      \node[below=2pt,align=center] at (-2.75,-2.10)
        {$G_{p,k}^{\circ}[\pi_k^{-1}(V(A))]$};
      \node[below=2pt,align=center] at ( 2.75,-2.10)
        {$G_{p,k}^{\circ}[\pi_k^{-1}(V(B))]$};
    \end{scope}

    \node at (-.95,.60) {lifting};
    \draw[hollow arrow]
      (-2.05,-.05) -- (-.20,-.05) -- (-.20,-.15) --
      ( .35, 0)    -- (-.20, .15) -- (-.20, .05) --
      (-2.05, .05) -- cycle;

    \node[below=2pt,align=center,font=\normalsize] at (-6.00,-2.10)
      {$\overline{G}_{p,k}$};
   
  \end{tikzpicture}

  \caption{Suppose that, for some $\sigma\in K\setminus\{1\}$, there exists a path $P_A$ joining $u$ to $\sigma(u)$ in $G_{p,k}^{\circ}[\pi_k^{-1}(V(A))]$ and a path $P_B$ joining $v$ to $\sigma(v)$ in $G_{p,k}^{\circ}[\pi_k^{-1}(V(B))]$. Then $e\cup P_B\cup\sigma(e)\cup P_A$ forms a cycle containing $e$.}
  \label{fig:bridgelift}
\end{figure}

We clarify some notation.
Throughout, the notation $Kx$ will be used in two closely related but importantly distinct senses: as a quotient vertex of $\overline{G}_{p,k}$ and as the corresponding four-vertex subset of $G_{p,k}^{\circ}$.
Let $Q$ be a connected component of a graph, and let $f$ be a bridge of $Q$. Then there exist unique connected subgraphs $X$ and $Y$ of $Q$ such that $Q-f=X\sqcup Y$, which we refer to as the two \emph{shores} of the bridge $f$. 

\subsection{Proof Overview}\label{Subsec:overview}
The proof of Proposition~\ref{prop:generalized-bridge} is based on two complementary ideas. 
The first idea is a combinatorial construction of a cycle containing $e$, formulated in algebraic terms; see Figure~\ref{fig:bridgelift} for an illustration. Let $e=uv$ be a non-loop edge of $G_{p,k}^{\circ}$, and suppose its quotient $\overline{e}$ is a bridge of $\overline{G}_{p,k}$ with shores $A$ and $B$. To construct a cycle containing $e$, we seek for a non-identity element $\sigma\in K$ such that the endpoints of $e$ and $\sigma(e)$ can be connected in the lifts of the two shores of $\overline{e}$, respectively. This motivates us to record,  for a given vertex $x\in X_{p,k}^{\circ}$, which elements in $Kx$ can be reached from $x$ in the lift of $\overline{G}_{p,k}$. To be precise, let $R$ be a connected subgraph of $\overline{G}_{p,k}$. For $x\in X_{p,k}^{\circ}$ satisfying $Kx\in V(R)$, define
\begin{equation}\label{equ:H_R}
H_R(x)\coloneqq
\left\{\sigma\in K:
x\text{ and }\sigma(x)\text{ are joined by a path in }
G_{p,k}^{\circ}[\pi_k^{-1}(V(R))]
\right\}.
\end{equation}
With this notation, our goal is to show that $H_A(u)\cap H_B(v)$ contains a non-identity element $\sigma$.
The main difficulty is to rule out the possibility that $H_R(x)$ consists solely of the identity element for each $(R,x)\in \{(A,u),(B,v)\}$.

Our second idea is to use the fundamental graph-theoretic concept of \emph{flow} to analyze the quotient bridges. 
Let $G=(V,E)$ be a directed graph and let $\Gamma$ be an abelian group. An assignment $\phi:E\rightarrow\Gamma$ is called a $\Gamma$-\emph{flow} if the \emph{divergence} defined by
$$
\operatorname{div}(x)\coloneqq\sum_{f\in\delta^+(x)}\phi(f)-\sum_{f\in\delta^-(x)}\phi(f),
$$
equals zero at every vertex $x\in V$, where we denote by $\delta^+(x)$ (resp. $\delta^-(x)$) the set of edges directed out of (resp. directed into) $x$. A $\Gamma$-flow $\phi$ is \emph{nowhere-zero} if $\phi(f)\neq 0$ for any $f\in E$. The use of flows to study the quotient bridges is inspired by the classical result in graph theory stating that if a directed graph admits a nowhere-zero $\Gamma$-flow for some abelian group $\Gamma$, then it contains no bridges. To see this, suppose to the contrary that $\phi$ is a nowhere-zero $\Gamma$-flow on a directed graph $G$ containing a bridge $f$. Let $X$ and $Y$ be the two shores of $f$. Then summing $\operatorname{div}(x)$ over all vertices $x\in V(X)$ and noting that the contributions from two endpoints of every oriented edge other than $f$ cancel with each other, we obtain $\phi(f)=0$, contradicting the nowhere-zero property of $\phi$. 
We use flows twice below, in Lemmas~\ref{lem:diagonal-bridge} and \ref{lem:nontrivial-signs}.

\smallskip

We now briefly explain how these two seemingly disparate ideas work together. 
Consider a non-loop edge $e$ of $G_{p,k}^\circ$ whose quotient edge $\overline{e}$ is a bridge of the quotient graph $\overline{G}_{p,k}$.
We construct flows on an orientation of $\overline{G}_{p,k}$, using the algebraic information associated with the vertices and edges of $G^\circ_{p,k}$.\footnote{A cautious reader may notice that here we refer to vertices and edges of $G^\circ_{p,k}$, rather than those of $\overline{G}_{p,k}$. This distinction is essential: a quotient edge need not have a legal label, and Lemma~\ref{lem:generalized-labels} is precisely what guarantees the existence of the legal labels needed in our construction.}
Although these flows need not be nowhere-zero, the preceding argument still forces the flow value on the quotient bridge $\overline{e}$ to vanish, which in turn yields further algebraic information.
More specifically, let $R$ be a shore of $\overline{e}$ and suppose that $R$ is cusp-free. One such flow typically yields algebraic relations at the endpoint $x$ of $e$ lying in the lift of $R$, thereby restricting the possibilities for $H_R(x)$. 
In particular, our second flow (i.e., Lemma~\ref{lem:nontrivial-signs}) rules out the possibility that $H_R(x)$ consists solely of the identity element. 
Together with the properties of cusps established in Lemma~\ref{lem:cuspconnected}, these facts allow us to apply the first idea and construct a desired cycle containing $e$. 
See Figure~\ref{fig:proofsketch} for a roadmap of the proof of Proposition~\ref{prop:generalized-bridge}

\begin{figure}[t]
\centering
\begin{tikzpicture}[x=1cm,y=1cm,
  pfimplies/.style={line width=.35pt,double equal sign distance=3mm,-{Latex[line width=.35pt,length=3mm,fill=white]}},
  pfcase/.style={line width=.45pt},
  hrarrow/.style={line width=.45pt,-{Latex}},
  pfbase/.style={draw,rounded corners=1.2pt,line width=.45pt,align=center,font=\small,inner xsep=5pt, inner ysep=3.2pt},
  pfroot/.style={pfbase,text width=11.6cm,line width=.7pt},
  pfbranch/.style={pfbase,text width=4.5cm},
  pfresult/.style={pfbase,text width=5.35cm,line width=.7pt},
  pffinal/.style={pfbase,line width=.8pt},
  pflemmalabel/.style={font=\small,fill=white,inner xsep=2pt,inner ysep=.8pt}
]

\node[pfbranch, text width=4.5cm] (fixshore) at (-0.2,-1.30-0.1)
  {Fix $(R,x)\in\{(A,u),(B,v)\}$.};

\node[pfbranch,text width=4.5cm] (cusp) at (-4.30+0.1,-3.05+0.8) {$R$ contains a cusp.};
\node[pfbranch,text width=4.5cm] (nocusp) at (3.30+0.5,-3.05+0.8) {$R$ is cusp-free.};

\node[pfbranch,text width=4.5cm] (connected) at (-4.30+0.1,-4.55+0.8)
  {$G_{p,k}^{\circ}[\pi_k^{-1}(V(R))]$ is connected.};
\node[pfbranch,text width=4.5cm] (coordinates) at (3.30+0.5,-4.55+0.8)
  {$x_2^2=x_3^2\ne0$.};

\node[pfbranch,text width=4.5cm] (leftsigma) at (-4.30+0.1,-6.05+1.2) {$\sigma_1\in H_R(x)$.};
\node[pfbranch,text width=4.5cm] (possibilities) at (3.30+0.5,-6.05+1.2)
  {$H_R(x)=\{1\},\ \{1,\sigma_1\}\text{ or }K$.};

\node[pfbranch,text width=4.5cm] (singleton) at (3.30+0.5,-7.55+1.7)
  {$H_R(x)=\{1\}$.};

\node[pfbranch,text width=4.5cm] (bothshores) at (-4.30+0.1,-9.15+2.5)
  {Applying to both shores gives $\sigma_1\in H_A(u)\cap H_B(v)$.};
\node[pfbranch,text width=2.8cm] (exceptional) at (3.30-2-1,-9.15+2.4)
  {$x_2$ or $x_3=\pm 2$.};
\node[pfbranch,text width=3.5cm] (generic) at (3.30+0.5+0.5,-9.15+2.4)
  {$x_2^2=x_3^2\ne4$.};
\node[pfbranch,text width=3.5cm] (onesheet) at (3.30+0.5+0.5,-10.55+2.7)
  {$R$ has a one-sheet lift.};

\node[pffinal,text width=4.5cm] (maincycle) at (-4.30+0.1,-12.05+2.8)
  {$e$ lies on a cycle.};
\node[pffinal,text width=2.8cm,font=\footnotesize] (cycleleaf) at (3.30-2-1,-12.05+2.8)
  {$e$ lies on a cycle.};
\node[pffinal,text width=3.5cm,font=\footnotesize] (contradiction) at (3.30+0.5+0.5,-12.05+2.8)
  {A contradiction.};

\coordinate (cuspsplit) at ($(fixshore.south)+(0,-.12)$);
\draw[pfcase] (fixshore.south) -- (cuspsplit);
\draw[pfcase] (cuspsplit) -- (cusp.north |- cuspsplit) -- (cusp.north);
\draw[pfcase] (cuspsplit) -- (nocusp.north |- cuspsplit) -- (nocusp.north);

\draw[hrarrow] (possibilities.west) -- (leftsigma.east);

\draw[hrarrow] (possibilities.south) -- (singleton.north);

\coordinate (coordsplit) at ($(singleton.south)+(0,-.12)$);
\draw[pfcase] (singleton.south) -- (coordsplit);
\draw[pfcase] (coordsplit) -- (exceptional.north |- coordsplit) -- (exceptional.north);
\draw[pfcase] (coordsplit) -- (generic.north |- coordsplit) -- (generic.north);

\draw[pfimplies] (cusp) -- node[pflemmalabel,right=3pt] {Lemma~\ref{lem:cuspconnected}} (connected);
\draw[pfimplies] (connected) -- node[pflemmalabel,right=3pt] {By definition of $H_R(x)$} (leftsigma);
\draw[pfimplies] (nocusp) -- node[pflemmalabel,right=3pt,align=center] {Lemma~\ref{lem:diagonal-bridge}\\[-1pt](graph flow)} (coordinates);
\draw[pfimplies] (coordinates) -- node[pflemmalabel,right=3pt] {$\ $Lemma~\ref{lem:diagonal-symmetry}} (possibilities);
\draw[pfimplies] (leftsigma.south) -- (leftsigma.south |- bothshores.north);

\draw[pfimplies] (bothshores) --
  node[pflemmalabel,left=3pt,align=center] {Constructing\\[-1pt] a cycle as\\[-1pt] in Figure~\ref{fig:bridgelift}}
  (maincycle);
\draw[pfimplies] (exceptional) -- node[pflemmalabel,left=3pt] {Lemma~\ref{lem:pole-cycle}} (cycleleaf);
\draw[pfimplies] (generic) -- node[pflemmalabel,right=3pt] {$\ $Lemma~\ref{lem:reachable-signs}} (onesheet);
\draw[pfimplies] (onesheet) -- node[pflemmalabel,right=3pt,align=center] {Lemma~\ref{lem:nontrivial-signs}\\[-1pt](graph flow)} (contradiction);
\end{tikzpicture}
\caption{A roadmap of the proof of Proposition~\ref{prop:generalized-bridge}.}
\vspace{-2mm}
\captionsetup{width=0.75\textwidth}
\caption*{Here, we work in the following setting. Let $e=uv$ be a label-$1$ non-loop edge of $G_{p,k}^\circ$ such that the quotient edge $\overline{e}$ joining $Ku$ and $Kv$ is a bridge of $\overline{G}_{p,k}$. Let $A$ and $B$ be the two shores of $\overline{e}$.}
\label{fig:proofsketch}
\end{figure}

\subsection{The Proof}
Before presenting the proof of Proposition~\ref{prop:generalized-bridge}, we establish a sequence of lemmas.

We first introduce several functions that provide the essential ingredients for constructing graph flows. 
For a vertex $x=(x_1,x_2,x_3)\in X_{p,k}^{\circ}$, we define
\[
\Delta_1(x)\coloneqq2x_1-x_2x_3,\quad \Delta_2(x)\coloneqq2x_2-x_1x_3,\quad \Delta_3(x)\coloneqq2x_3-x_1x_2.
\]
Thus $m_i(x)=x$ if and only if $\Delta_i(x)=0$. For $i\in\{1,2,3\}$, we define the following functions
\[
S_1(x)\coloneqq
\frac{\Delta_1(x)\cdot (x_2^2-x_3^2)}{x_2x_3},\quad
S_2(x)\coloneqq
\frac{\Delta_2(x)\cdot (x_3^2-x_1^2)}{x_1x_3},\quad
S_3(x)\coloneqq
\frac{\Delta_3(x)\cdot (x_1^2-x_2^2)}{x_1x_2},
\]
whenever the corresponding denominator is nonzero. 

In the first lemma, we collect several useful properties of the functions $S_i$, which may be viewed, in some sense, as necessary conditions for constructing the desired flows.

\begin{lemma}
\label{lem:first-flow}
The following statements hold.
\begin{itemize}
\item[(1)] Wherever $S_i(x)$ is defined, $S_i(m_i(x))$ is also defined and we have
\[
S_i(m_i(x))=-S_i(x).
\]
\item[(2)] Wherever $S_i(x)$ is defined, $S_i(\sigma(x))$ is also defined and for every $\sigma\in K$, we have
\[
S_i(\sigma(x))=S_i(x).
\]
\item[(3)] If $x$ is regular, then
\[
S_1(x)+S_2(x)+S_3(x)=0.
\]
\end{itemize}
\end{lemma}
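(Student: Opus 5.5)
This is a direct computation. By the symmetry of the definitions under permuting coordinates, it suffices to treat $i=1$ in parts (1) and (2); part (3) is a single polynomial identity.

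\emph{Part (1).} Let $x=(x_1,x_2,x_3)$ with $x_2x_3\neq0$, so that $S_1(x)$ is defined. Then $m_1(x)=(x_2x_3-x_1,x_2,x_3)$ has the same second and third coordinates. Hence $S_1(m_1(x))$ is defined, and both $x_2x_3$ and $x_2^2-x_3^2$ are unchanged. For the remaining factor,
\[
\Delta_1(m_1(x))=2(x_2x_3-x_1)-x_2x_3=x_2x_3-2x_1=-\Delta_1(x),
\]
which gives $S_1(m_1(x))=-S_1(x)$.

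\emph{Part (2).} Each $\sigma\in K$ multiplies the coordinates by signs $\varepsilon_1,\varepsilon_2,\varepsilon_3\in\{\pm1\}$ with $\varepsilon_1\varepsilon_2\varepsilon_3=1$. The denominator $x_2x_3$ is multiplied by $\varepsilon_2\varepsilon_3=\varepsilon_1$, so $S_1(\sigma(x))$ is still defined. The factor $x_2^2-x_3^2$ is unchanged. Finally,
\[
\Delta_1(\sigma(x))=2\varepsilon_1x_1-\varepsilon_2\varepsilon_3x_2x_3=\varepsilon_1\Delta_1(x).
\]
The two factors of $\varepsilon_1$ cancel, so $S_1(\sigma(x))=S_1(x)$.

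\emph{Part (3).} If $x$ is regular, all three functions are defined. Multiply the sum by $x_1x_2x_3\neq0$. Since $x_1\Delta_1(x)=2x_1^2-P$ with $P\coloneqq x_1x_2x_3$, and similarly $x_2\Delta_2(x)=2x_2^2-P$ and $x_3\Delta_3(x)=2x_3^2-P$, we get
\[
x_1x_2x_3\sum_{i}S_i(x)=\sum_{\mathrm{cyc}}(2x_1^2-P)(x_2^2-x_3^2).
\]
The $P$-terms contribute $-P\sum_{\mathrm{cyc}}(x_2^2-x_3^2)=0$. The remaining terms give
\[
2\bigl[x_1^2(x_2^2-x_3^2)+x_2^2(x_3^2-x_1^2)+x_3^2(x_1^2-x_2^2)\bigr]=0,
\]
since the bracket cancels term by term. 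Dividing by $x_1x_2x_3$ gives $S_1(x)+S_2(x)+S_3(x)=0$.

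None of these steps uses the Markoff equation; the identity in part (3) holds for any $x$ with nonzero coordinates. The only point needing care is in part (2): the sign $\varepsilon_2\varepsilon_3$ coming from the denominator must be identified with $\varepsilon_1$, which is where the constraint $\varepsilon_1\varepsilon_2\varepsilon_3=1$ defining $K$ is used.
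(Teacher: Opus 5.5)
Your proof is correct and follows the paper's argument essentially step for step. For (1) you use $\Delta_i(m_i(x))=-\Delta_i(x)$; for (2) the sign change of $\Delta_1$ cancels that of the denominator $x_2x_3$, which you make explicit via $\varepsilon_2\varepsilon_3=\varepsilon_1$; and for (3) you clear denominators using $x_i\Delta_i(x)=2x_i^2-x_1x_2x_3$.
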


\begin{proof}
(1) By direct calculations, we have $\Delta_i(x)=-\Delta_i(m_i(x))$, which, together with the fact that the involution $m_i$ fixes the other two coordinates, proves (1). 

(2) Every non-identity automorphism $\sigma\in K$ changes the signs of two coordinates of $x$. It is straightforward to check that the sign-change of the denominator of $S_i$ cancels with that of its numerator. This proves (2).

(3) Multiplying the left-hand side of the desired identity by $x_1x_2x_3$, we obtain
\[
\bigl(S_1(x)+S_2(x)+S_3(x)\bigr)\cdot x_1x_2x_3=x_1\Delta_1(x)(x_2^2-x_3^2)
+x_2\Delta_2(x)(x_3^2-x_1^2)
+x_3\Delta_3(x)(x_1^2-x_2^2).
\]
Substituting $x_i\Delta_i(x)=2x_i^2-x_1x_2x_3$ for $i\in\{1,2,3\}$, this equals
\[
2\bigl(x_1^2(x_2^2-x_3^2)
+x_2^2(x_3^2-x_1^2)
+x_3^2(x_1^2-x_2^2)\bigr)
-x_1x_2x_3\bigl((x_2^2-x_3^2)
+(x_3^2-x_1^2)
+(x_1^2-x_2^2)\bigr)=0,
\]
as desired.
\end{proof}

We now use these $K$-invariant functions $S_i(x)$ to construct the first flow in the proof, thereby establishing certain algebraic relations among the corresponding vertices. 

\begin{lemma}
\label{lem:diagonal-bridge}
Assume that $k\in\FF_p\setminus\{4\}$. Let $\overline{e}$ be a bridge with label $1$ in a connected component $Q$ of $\overline{G}_{p,k}$, and suppose that $A$ and $B$ are the two \emph{shores} of $\overline{e}$, i.e.,
\[
Q-\overline{e}=A\sqcup B.
\]
If a shore $R\in\{A,B\}$ contains no cusp, then the endpoint $Kx\in V(R)$ of $\overline{e}$ satisfies
\(x_2^2=x_3^2.\)
\end{lemma}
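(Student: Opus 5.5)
The plan is to build a $\Gamma$-valued flow with $\Gamma=\FF_p$ on the subgraph $R\cup\{\overline{e}\}$, using the $K$-invariant functions $S_1,S_2,S_3$, and then apply the summation argument from Subsection~\ref{Subsec:overview} to the shore $R$. That argument forces the value on $\overline{e}$ to vanish. Write $\overline{e}$ as joining $Kx\in V(R)$ to $K m_1(x)$ in the other shore.

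\textbf{Step 1: every vertex of $R$ is regular.} Since $R$ contains no cusp, and axial vertices are not in $G_{p,k}^\circ$, every vertex $Ky$ of $R$ is regular. Hence all coordinates of $y$ are nonzero, and $S_1(y),S_2(y),S_3(y)$ are all defined. By Lemma~\ref{lem:first-flow}(2) they depend only on the orbit $Ky$.

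\textbf{Step 2: the edges at a regular vertex.} By Lemma~\ref{lem:generalized-labels}(1), the three quotient edges at $Ky$ obtained from $m_1,m_2,m_3$ lead to pairwise distinct orbits. So they are three distinct quotient edges, each with a unique label. By Lemma~\ref{lem:generalized-labels}(2), a label-$i$ quotient loop at $Ky$ arises only when $m_i(y)=y$. In that case $\Delta_i(y)=0$, so $S_i(y)=0$.

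\textbf{Step 3: the flow.} Orient each non-loop quotient edge with label $i$ from $Ky$ to $Km_i(y)$. Assign it the value $S_i(y)$. By Lemma~\ref{lem:first-flow}(1), reversing the orientation gives the value $S_i(m_i(y))=-S_i(y)$, so the flow is well defined independent of the chosen endpoint. Loops carry value $0$. The divergence at each regular vertex $Ky$ is then
\[
S_1(y)+S_2(y)+S_3(y)=0
\]
by Lemma~\ref{lem:first-flow}(3). This is where the regularity guaranteed by the cusp-free hypothesis is essential: at a cusp, $S_i$ would be undefined and the sum identity fails.

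\textbf{Step 4: summation over the shore.} Sum the divergences over all vertices of $R$. Each edge with both endpoints in $R$ contributes $S_i(y)+S_i(m_i(y))=0$, and loops contribute $0$. The only edge leaving $R$ is the bridge $\overline{e}$, so the total is $\pm S_1(x)$. Hence
\[
S_1(x)=\frac{\Delta_1(x)(x_2^2-x_3^2)}{x_2x_3}=0 .
\]
Finally, $\overline{e}$ is a bridge and therefore not a loop, so $m_1(x)\neq x$ and $\Delta_1(x)\neq 0$. We conclude $x_2^2=x_3^2$.

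\textbf{Main obstacle.} The delicate point is the bookkeeping in Steps 2 and 3. One must make sure the flow is genuinely defined on quotient edges: no two labels collapse to the same quotient edge, and loops contribute zero. Both facts come from Lemma~\ref{lem:generalized-labels}(1)--(2), which uses $k\neq 4$.
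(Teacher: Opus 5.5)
Your proof is correct and takes the paper's route: you put the flow $\overline{f}\mapsto S_i(y)$ on $R\cup\{\overline{e}\}$, get zero divergence at every regular vertex from Lemma~\ref{lem:first-flow}(3), and sum over the shore to force $S_1(x)=0$. The only cosmetic difference is how you dispose of quotient loops: you use Lemma~\ref{lem:generalized-labels}(2) to get $m_i(y)=y$ and hence $\Delta_i(y)=0$, whereas the paper derives $S_i(y)=-S_i(y)$ from Lemma~\ref{lem:first-flow}(1)--(2) and uses that $p$ is odd.
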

\begin{proof}
Since $R$ contains no cusp, every vertex of $R$ is regular. 
Choose an orientation for each quotient edge $\overline{f}\in E(R)\cup\{\overline{e}\}$ such that $\overline{e}$ is directed out of $R$. Then every oriented $\overline{f}$ has a regular initial endpoint, say $Ky\in V(R)$. 
By Lemma~\ref{lem:generalized-labels}, $\overline{f}$ has a unique label $i$, and we set
\[
\phi(\overline{f})\coloneqq S_i(y).
\]
Lemma \ref{lem:first-flow}(2) shows that this is independent of the representative $y$. And the divergence 
$$
\operatorname{div}(Ky)=\sum_{\overline{f}\in\delta^{+}(Ky)}\phi(\overline{f})-\sum_{\overline{f}\in\delta^{-}(Ky)}\phi(\overline{f})
$$ 
is independent of the choice of the orientation of internal edges in $R$ by Lemma \ref{lem:first-flow}(1).\footnote{Note that a loop $\overline{f}$ of $Ky$ contributes $\phi(\overline{f})$ to both summations, and hence has net contribution zero to $\operatorname{div}(Ky)$.}

Next, we claim that for any quotient vertex $Ky\in V(R)$, it always holds that $$\operatorname{div}(Ky)=0.$$ 
If $Ky$ has no incident loop, then we can imagine that all incident edges are directed out of $Ky$. So we have
$$
\operatorname{div}(Ky)=S_1(y)+S_2(y)+S_3(y).
$$
By Lemma \ref{lem:first-flow}(3), we derive that
$\operatorname{div}(Ky)=0.$
Otherwise suppose that $Ky$ has a loop, say of label $i$. 
Then $m_i(y)=\tau(y)$ for some $\tau\in K$, and hence
\[
S_i(y)=S_i(\tau(y))=S_i(m_i(y))=-S_i(y).
\]
Since $p$ is odd, we have $S_i(y)=0$. Note that a loop makes zero net contribution to the divergence, while every incident non-loop edge of label $j$ contributes $S_j(y)$. Using Lemma \ref{lem:first-flow}(3), this again shows that $\operatorname{div}(Ky)=0.$

Summing the divergence over all vertices in $R$ gives $\sum_{Ky\in V(R)} \operatorname{div}(Ky)=0.$
By cancellation of the contributions from the two endpoints of each oriented edge in $R$, the only remaining term on the left-hand side is $S_1(x)$, contributed by the quotient edge $\overline{e}$. Therefore, we have
\(S_1(x)=\frac{\Delta_1(x)(x_2^2-x_3^2)}{x_2x_3}=0.\)
Since $R$ has no cusps, we have $x_2x_3\neq0$. Since $\overline{e}$ is a non-loop, we have $m_1(x)\neq x$, and thus $\Delta_1(x)\neq0$. It follows from the definition of $S_1$ that $x_2^2=x_3^2$, as desired.
\end{proof}

Recall the definition of $H_R(x)$ in \eqref{equ:H_R}.
We turn to consider the structure of $H_R(x)$ for a shore $R$ of a bridge $\overline{e}$ in $\overline{G}_{p,k}$, where $x$ is an endpoint of $e$ with $Kx\in V(R)$. It is worth to note the fact that 
$$
H_R(x)=H_R(\sigma(x))
$$
for any $\sigma\in K$. Indeed, for any $\sigma,\tau\in K$, if there is a path $P$ connecting $x$ and $\tau(x)$ in $G_{p,k}^{\circ}[\pi_k^{-1}(V(R))]$, then the path $\sigma(P)$ connects $\sigma(x)$ and $\sigma(\tau(x))=\tau(\sigma(x))$ in $G_{p,k}^{\circ}[\pi_k^{-1}(V(R))]$.
The next lemma characterizes $H_R(x)$ under certain conditions.

\begin{lemma}
\label{lem:diagonal-symmetry}
Let $\overline{e}$ be a quotient bridge with label 1. Suppose that there exists an endpoint $z$ of $e$ satisfying
\[
z_2^2=z_3^2\neq 0.
\]
Then, for each shore $R$ of $\overline{e}$, if $x$ is the endpoint of $e$ with $Kx\in V(R)$, we have
$$
H_R(x)=\{1\},\ \{1,\sigma_1\}\text{ or }K.
$$
\end{lemma}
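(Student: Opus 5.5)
The plan is to show that $H_R(x)$ is a subgroup of $K$, and then to rule out the two subgroups $\{1,\sigma_2\}$ and $\{1,\sigma_3\}$ using a coordinate-swapping symmetry coming from $z_2^2=z_3^2$. The subgroups of $K\cong(\ZZ/2\ZZ)^2$ are $\{1\}$, $\{1,\sigma_1\}$, $\{1,\sigma_2\}$, $\{1,\sigma_3\}$ and $K$, so this gives exactly the three listed possibilities.

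\emph{Step 1: $H_R(x)$ is a subgroup.} Write $G_R\coloneqq G_{p,k}^{\circ}[\pi_k^{-1}(V(R))]$; it is $K$-stable. Clearly $1\in H_R(x)$, and inverses are automatic since every element of $K$ is an involution. For closure, let $\sigma,\tau\in H_R(x)$, and let $P$ be a path in $G_R$ from $x$ to $\tau(x)$. Then $\sigma(P)$ is a path in $G_R$ from $\sigma(x)$ to $\sigma\tau(x)$. Concatenating it with a path from $x$ to $\sigma(x)$ gives a walk in $G_R$ from $x$ to $\sigma\tau(x)$. Hence $\sigma\tau\in H_R(x)$.

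\emph{Step 2: the symmetry.} Since $e$ has label $1$, its two endpoints $u,v$ share their second and third coordinates. So both satisfy $x_2^2=x_3^2\neq0$, and we may write $x_3=\varepsilon x_2$ with $\varepsilon\in\{\pm1\}$, the same $\varepsilon$ for both endpoints. Define
\[
\tau(y_1,y_2,y_3)\coloneqq(y_1,\varepsilon y_3,\varepsilon y_2).
\]
This map has the following properties.
\begin{itemize}
\item[(i)] It preserves the generalized Markoff equation, because the squares are permuted and $y_2y_3$ is unchanged.
\item[(ii)] It preserves $X_{p,k}^{\circ}$.
\item[(iii)] A direct check gives $\tau m_1=m_1\tau$, $\tau m_2=m_3\tau$ and $\tau m_3=m_2\tau$, so $\tau$ is an automorphism of $G_{p,k}^{\circ}$.
\item[(iv)] It satisfies $\tau\sigma_1\tau^{-1}=\sigma_1$ and $\tau\sigma_2\tau^{-1}=\sigma_3$.
\end{itemize}
By (iv), $\tau$ normalizes $K$, so it induces an automorphism $\overline{\tau}$ of $\overline{G}_{p,k}$ with $\overline{\tau}(Ky)=K\tau(y)$. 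Moreover $\tau(u)=u$ and $\tau(v)=v$, so $\tau$ fixes $e$. Hence $\overline{\tau}$ fixes $Ku$, $Kv$ and $\overline{e}$. It follows that $\overline{\tau}$ maps the component $Q$ containing $\overline{e}$ to itself, and maps $Q-\overline{e}$ to itself. Since it fixes $Kx\in V(R)$, it sends the shore $R$ (the component of $Q-\overline{e}$ containing $Kx$) onto $R$. Therefore $\tau$ maps $\pi_k^{-1}(V(R))$ onto itself and restricts to an automorphism of $G_R$.

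\emph{Step 3: conclusion.} Suppose $\sigma_2\in H_R(x)$, witnessed by a path $P$ in $G_R$ from $x$ to $\sigma_2(x)$. Then $\tau(P)$ is a path in $G_R$ from $\tau(x)=x$ to
\[
\tau\sigma_2(x)=\sigma_3\tau(x)=\sigma_3(x),
\]
so $\sigma_3\in H_R(x)$. By the same argument, $\sigma_3\in H_R(x)$ implies $\sigma_2\in H_R(x)$. Thus $H_R(x)$ cannot equal $\{1,\sigma_2\}$ or $\{1,\sigma_3\}$, and by Step 1 it is one of $\{1\}$, $\{1,\sigma_1\}$, $K$.

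The only delicate point is Step 2: we must verify that $\overline{\tau}$ stabilizes the shore $R$ rather than swapping it with the other shore. This holds because $\tau$ fixes both endpoints $u,v$ of $e$, not merely the edge $\overline{e}$.
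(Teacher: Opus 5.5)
Your proposal is correct and follows essentially the paper's route: the paper first normalizes to $x_2=x_3$ using $\sigma_2$, then applies the plain swap $\rho(y_1,y_2,y_3)=(y_1,y_3,y_2)$ (your twisted $\tau$ simply absorbs that normalization) to get $\sigma_2\in H_R(x)\iff\sigma_3\in H_R(x)$, and then uses the same composition trick as your Step 1 to obtain $\sigma_1$. Your explicit check that the swap stabilizes $\pi_k^{-1}(V(R))$, because it normalizes $K$ and fixes both endpoints of $e$, is more careful than the paper, which justifies this only from $\rho(x)=x$ and edge-preservation.
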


\begin{proof}
Since $z_2^2=z_3^2$, we have $z_2=z_3$ or $z_2=-z_3$.
Since $H_R(x)=H_R(\sigma(x))$ for any $\sigma\in K$, we may assume that $z=(z_1,t,t)$ by replacing $e$ by $\sigma_2(e)$ if necessary. 
Since $e$ has label 1, the other endpoint of $e$ is $(t^2-z_1,t,t)$. In particular, if we write $x=(x_1,x_2,x_3)$, we must have
$$
x_2=x_3.
$$

We claim that $\sigma_2\in H_R(x)$ if and only if $\sigma_3\in H_R(x)$. By symmetry, it suffices to prove one direction. Suppose that $\sigma_2\in H_R(x)$. Then there exists a path $P$ in $G_{p,k}^{\circ}[\pi_k^{-1}(V(R))]$ from $x$ to $\sigma_2(x)$. Consider the bijection $\rho\colon X_{p,k}^{\circ}\to X_{p,k}^{\circ}$ defined by
\[
\rho(y_1,y_2,y_3)\coloneqq(y_1,y_3,y_2).
\]
It is straightforward to verify that
\begin{equation}
\label{eqn:rho_commutes}
\rho\circ m_1=m_1\circ\rho,\qquad \rho\circ m_2=m_3\circ\rho,\qquad \rho\circ m_3=m_2\circ\rho.
\end{equation}
Thus, $\rho$ also preserves edges.
Furthermore, since $x_2=x_3$, we have
$$
\rho(x)=x.
$$
Since $\rho\circ\sigma_2=\sigma_3\circ\rho$, we see that 
$$
\rho(\sigma_2(x))=\sigma_3(x).
$$
Therefore, we see that the path $\rho(P)$ connects $x=\rho(x)$ and $\sigma_3(x)=\rho(\sigma_2(x))$. Since $\rho(x)=x$ and $\rho$ preserves edges, it maps the connected component of $x$ in $G_{p,k}^{\circ}[\pi_k^{-1}(V(R))]$ to itself. Hence $\rho(P)$ is contained in $G_{p,k}^{\circ}[\pi_k^{-1}(V(R))]$, and so $\sigma_3\in H_R(x)$. This proves the claim.

Assuming that $\sigma_2,\sigma_3\in H_R(x)$, we show that $H_R(x)=K$. For any $i\in\{2,3\}$, there exists a path $P_i$ connecting $x$ and $\sigma_i(x)$ in $G_{p,k}^{\circ}[\pi_k^{-1}(V(R))]$. Since $K$ commutes with the Vieta involutions, $\sigma_2(P_3)$ is a path connecting $\sigma_2(x)$ and $\sigma_2\sigma_3(x)=\sigma_1(x)$ in $G_{p,k}^{\circ}[\pi_k^{-1}(V(R))]$. 
Hence, concatenating $P_2$ with $\sigma_2(P_3)$ yields a walk, and thus a path, connecting $x$ and $\sigma_1(x)$ in $G_{p,k}^{\circ}[\pi_k^{-1}(V(R))]$. It follows that $\sigma_1\in H_R(x)$, and therefore $H_R(x)=K$.

Combining the preceding two properties, we deduce that if either $\sigma_2$ or $\sigma_3$ belongs to $H_R(x)$, then $H_R(x)=K.$
Otherwise, $H_R(x)$ has to be either $\{1\}$ or $\{1,\sigma_1\}$. This finishes the proof.
\end{proof}

Let $R$ be a connected subgraph of $\overline{G}_{p,k}$. 
In the following lemma, we identify a special type of component of $G_{p,k}^{\circ}[\pi_k^{-1}(V(R))]$ arising when $Kx\in V(R)$ satisfies $H_R(x)=\{1\}$, as in Lemma~\ref{lem:diagonal-symmetry}.

\begin{lemma}
\label{lem:reachable-signs}
Let $R$ be a connected subgraph of $\overline{G}_{p,k}$. Suppose that $Kx\in V(R)$ with $H_R(x)=\{1\}$. Then the connected component $L$ of $G_{p,k}^{\circ}[\pi_k^{-1}(V(R))]$ containing $x$ contains exactly one vertex in $Ky$ for each quotient vertex $Ky\in V(R)$.
We call such a component $L$ a \emph{one-sheet lift} of $R$.
\end{lemma}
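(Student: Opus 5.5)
Write $\Lambda\coloneqq G_{p,k}^{\circ}[\pi_k^{-1}(V(R))]$ and let $L$ be its connected component containing $x$. There are two things to show for each $Ky\in V(R)$: that $L$ meets $Ky$, and that it meets $Ky$ in at most one vertex. We prove them in this order.

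\emph{Existence.} Since $R$ is connected, we take a path $Kx=Ky_0,Ky_1,\dots,Ky_n=Ky$ in $R$. We lift it step by step. Suppose we have reached $w_{j}\in Ky_j$ inside $L$. The quotient edge $Ky_jKy_{j+1}$ means $m_i(y_j)=\tau(y_{j+1})$ for some $i$ and some $\tau\in K$. Write $w_j=\kappa(y_j)$ with $\kappa\in K$. Since $K$ commutes with the $m_i$, we get $m_i(w_j)=\kappa\tau(y_{j+1})\in Ky_{j+1}$. This vertex lies in $\pi_k^{-1}(V(R))$, so it lies in $\Lambda$, and it is adjacent to $w_j$, so it lies in $L$. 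By induction, $L\cap Ky\neq\emptyset$.

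\emph{Uniqueness.} Suppose $L$ contains two distinct vertices $w$ and $\sigma(w)$ of $Ky$, where $\sigma\in K\setminus\{1\}$. Then $\sigma\in H_R(w)$. From the existence step, $w$ is joined to $x$ inside $\Lambda$, so $w$ and $x$ lie in the same component $L$.

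We now transfer this to $x$ using the same translation trick as in the remark $H_R(x)=H_R(\sigma(x))$. Each $\tau\in K$ preserves $\pi_k^{-1}(V(R))$ and the edges of $G_{p,k}^\circ$, so $\tau$ is an automorphism of $\Lambda$. Take a path $P$ in $\Lambda$ from $x$ to $w$. Then $\sigma(P)$ is a path in $\Lambda$ from $\sigma(x)$ to $\sigma(w)$. Both $w$ and $\sigma(w)$ lie in $L$, and $x\in L$ as well, so concatenating gives
\[
x\;\longrightarrow\; w \;\longrightarrow\; \sigma(w)\;\longrightarrow\;\sigma(x),
\]
where the first and second pieces lie in $L$ and the last piece is $\sigma(P)$ reversed. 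Hence $x$ and $\sigma(x)$ lie in the same component of $\Lambda$, so $\sigma\in H_R(x)=\{1\}$. This contradicts $\sigma\neq 1$, and therefore $|L\cap Ky|=1$.

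The main point needing care is this transfer step, showing that $H_R$ takes the same value at every vertex of $L$ and not only on the orbit $Kx$. It rests on the fact that $K$ acts on $\Lambda$ by automorphisms, which holds because $\pi_k^{-1}(V(R))$ is $K$-stable and $K$ commutes with the Vieta involutions. The rest of the argument is routine.
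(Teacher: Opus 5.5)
Your proof is correct and follows the paper's argument: you lift a path in $R$ to show that $L$ meets every $Ky$, and for uniqueness you translate a path by an element of $K$ to produce a non-identity element of $H_R(x)$. The paper concatenates a path $x\to\sigma_i(y)$ with the image under $\sigma_i\sigma_j^{-1}$ of a path $\sigma_j(y)\to x$, whereas you use $x\to w\to\sigma(w)$ followed by $\sigma(P)$ reversed; this difference is cosmetic, and your explicit check that $K$ acts on $G_{p,k}^{\circ}[\pi_k^{-1}(V(R))]$ by automorphisms spells out a step the paper leaves implicit.
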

\begin{proof}
Since $R$ is connected, for each quotient vertex $Ky$ in $R$, we can choose a path from $Kx$ to $Ky$, and lift it to a path from $x$ to one vertex in $Ky$. Since $L$ is the connected component containing $x$, we see that $L$ contains at least one vertex in $Ky$. Suppose that $L$ contains two distinct points $\sigma_i(y)$ and $\sigma_j(y)$ in $Ky$, where $\sigma_i, \sigma_j\in K$ with $i\neq j$. Choose a path $P_1$ in $L$ from $x$ to $\sigma_i(y)$ and a path $P_2$ from $\sigma_j(y)$ to $x$. Then concatenating $P_1$ with $\sigma_i\sigma_j^{-1}(P_2)$ gives a path from $x$ to $\sigma_i\sigma_j^{-1}(x)$ in $G_{p,k}^{\circ}[\pi_k^{-1}(V(R))]$. This means that $\sigma_i\sigma_j^{-1}\in H_R(x)$, contradicting the assumption $H_R(x)=\{1\}$. Hence $L$ contains exactly one vertex in $Ky$. 
\end{proof}

Such a one-sheet lift is clearly an obstruction to our proof approach, as illustrated in Figure~\ref{fig:bridgelift}. 
To eliminate this obstruction, we will make use of graph flows once more, whose definition involves the denominators $x_1^2-4$ and $x_3^2-4$.
The next lemma will be used when dealing with the vertices $x\in X_{p,k}^{\circ}$ at which one of these denominators vanishes. Its proof is a refinement of the calculation in \cite[Proposition 7.1(a)]{de2024non}.

\begin{lemma}
\label{lem:pole-cycle}
Assume that $k\in\FF_p\setminus\{4\}$. Let $x=(x_1,x_2,x_3)\in X_{p,k}^{\circ}$ satisfy $x_3=2\varepsilon$ with $\varepsilon\in\{\pm1\}$. Then $x$ lies on a cycle of length $2p$ formed by edges with alternating labels 1 and 2, which contains a cusp. The analogous statement holds after permuting the coordinates.
\end{lemma}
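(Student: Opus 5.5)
The plan is to fix the third coordinate $x_3=2\varepsilon$ and show that the group generated by $m_1$ and $m_2$ acts on the conic slice $\{y\in X_{p,k}: y_3=2\varepsilon\}$ through a translation of order $p$. This is the parabolic case, since the relevant trace $x_3^2-2$ equals $2$. Both $m_1$ and $m_2$ fix the third coordinate, so the whole orbit of $x$ under $\langle m_1,m_2\rangle$ lies in this slice. Every vertex of the slice has $x_3\neq 0$, so it is non-axial and the orbit stays inside $X_{p,k}^{\circ}$.

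\emph{Step 1: factor the equation.} Substituting $x_3=2\varepsilon$ into the generalized Markoff equation gives
\[
x_1^2+x_2^2-2\varepsilon x_1x_2=k-4,\qquad\text{that is,}\qquad (x_1-\varepsilon x_2)^2=k-4.
\]
Set $s\coloneqq x_1-\varepsilon x_2$. Since $k\neq 4$, we have $s\neq 0$, which is exactly where the hypothesis on $k$ enters. On the slice, $m_1$ acts by $x_1\mapsto 2\varepsilon x_2-x_1$ and $m_2$ acts by $x_2\mapsto 2\varepsilon x_1-x_2$. A direct check shows that each of them sends the invariant $s$ to $-s$.

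\emph{Step 2: compute the composite.} Let $T=m_2\circ m_1$. Then $T$ preserves $s$ and acts as the translation
\[
(x_1,x_2)\mapsto (x_1-2s,\ x_2-2\varepsilon s).
\]
Because $2s\neq 0$ and $p$ is odd, $T$ has exact order $p$. The points $T^j(x)$ for $0\le j<p$ are the $p$ distinct points of the line $x_1-\varepsilon x_2=s$ in the slice. The points $m_1T^j(x)$ are the $p$ distinct points of the line $x_1-\varepsilon x_2=-s$. Since $s\neq -s$, all $2p$ vertices are pairwise distinct. Therefore the closed walk
\[
x,\ m_1(x),\ m_2m_1(x),\ \dots,\ T^{p}(x)=x
\]
is a genuine cycle of length $2p$ whose edges alternate between labels $1$ and $2$. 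It has no repeated vertices and no loops.

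\emph{Step 3: find a cusp.} Along the $T$-orbit, the coordinate $x_2$ runs through all of $\FF_p$, because $T$ shifts it by the nonzero amount $-2\varepsilon s$. Hence some vertex on the cycle has $x_2=0$ and $x_3=2\varepsilon\neq 0$. Such a vertex is a cusp, since it has exactly one zero coordinate: $x_1=s$ in that case, and $s\neq 0$.

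The permuted statements follow by applying the coordinate permutation, which intertwines the Vieta involutions in the obvious way, as $\rho$ does in \eqref{eqn:rho_commutes}. The only delicate point is the distinctness and loop-freeness of the $2p$ vertices. This reduces entirely to $s\neq 0$, and the rest is routine algebra.
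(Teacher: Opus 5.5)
Your proof is correct and follows essentially the paper's route: you use the same invariant $s=x_1-\varepsilon x_2$ (the paper's $d$), which $m_1$ and $m_2$ negate, with $m_2m_1$ shifting $x_2$ by $-2\varepsilon s$, and this gives both the distinctness of the $2p$ vertices and the cusp; the only difference is that you obtain closure $T^p=\mathrm{id}$ directly from the translation formula, whereas the paper cites de Courcy-Ireland for it. One small fix: $x_3\neq0$ alone does not make a vertex non-axial, since you also need $(x_1,x_2)\neq(0,0)$, but this follows at once from $x_1-\varepsilon x_2=\pm s\neq0$.
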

\begin{proof}
By \cite[Proposition 7.1(a)]{de2024non}, the walk of length $2p$ formed by edges with alternating labels 1 and 2
\[
x=v_1\xline{m_1}w_1\xline{m_2}v_2\xline{m_1}w_2\xline{m_2}\cdots\xline{m_1}w_p\xline{m_2}v_{p+1}
\]
is a closed walk, that is, we have $v_1=v_{p+1}$. Next we prove that it is a cycle, by showing that these vertices $v_1,v_2,\ldots,v_p,w_1,w_2,\ldots,w_p$ are pairwise distinct in $X_{p,k}^{\circ}$. To see this, we define
$$
I(z)\coloneqq(z_1-\varepsilon z_2, z_2)
$$
for $z=(z_1,z_2,z_3)\in\{v_1,v_2,\ldots,v_p,w_1,w_2,\ldots,w_p\}$. Since $x_3=2\varepsilon$, the generalized Markoff equation becomes
\[
(x_1-\varepsilon x_2)^2=k-4.
\]
Set
\[
d\coloneqq x_1-\varepsilon x_2.
\]
Since $k\neq4$, we have $d\neq0$. Therefore, we have
\begin{equation}
\label{eqn:initial_invariant}
I(v_1)=(d,x_2).
\end{equation}
Note that for each $z=(z_1,z_2,z_3)\in\{v_1,v_2,\ldots,v_p,w_1,w_2,\ldots,w_p\}$, we have $z_3=2\varepsilon=x_3$. It also follows that if $I(z)=(d,z_2)$, then $z_1-\varepsilon z_2=d$ and
\[
I(m_1(z))=I\bigl((z_2z_3-z_1,z_2,z_3)\bigr)=(z_2z_3-z_1-\varepsilon z_2, z_2)=(\varepsilon z_2-z_1,z_2)=(-d,z_2), 
\]
and if $I(z)=(-d,z_2)$, then $z_1-\varepsilon z_2=-d$ and
\[
I(m_2(z))=I\bigl((z_1,z_1z_3-z_2,z_3)\bigr)=(z_1-\varepsilon z_1z_3+\varepsilon z_2,z_1z_3-z_2)=\bigl(\varepsilon z_2-z_1, (\varepsilon z_2-d)\cdot 2\varepsilon-z_2\bigr)=(d,z_2-2\varepsilon d).
\]
Applying these facts to (\ref{eqn:initial_invariant}) repeatedly, we see that
\[
I(v_i)=\bigl(d,x_2-2(i-1)\varepsilon d\bigr), \qquad \mbox{and} \qquad
I(w_i)=\bigl(-d,x_2-2(i-1)\varepsilon d\bigr).
\]
Because of $d\neq0$, we see that $I(v_1),I(v_2),\ldots,I(v_p),I(w_1),I(w_2),\ldots,I(w_p)$ are pairwise distinct. It follows that $v_1,v_2,\ldots,v_p,w_1,w_2,\ldots,w_p$ are pairwise distinct. Since the second coordinate of $I(z)$ records the second coordinate of $z$ and $d\neq0$, we see that the cycle contains a cusp (in fact, two). This completes the proof.
\end{proof}

For a vertex $x=(x_1,x_2,x_3)\in X_{p,k}^{\circ}$ satisfying $x_1^2\neq4$ and $x_3^2\neq4$, we define
\[
W_1(x)\coloneqq
-\frac{\Delta_1(x)\cdot x_3}{x_3^2-4},\quad
W_2(x)\coloneqq
\frac{2\Delta_2(x)\cdot (x_3^2-x_1^2)}
{(x_1^2-4)(x_3^2-4)},\quad \mbox{and} \quad
W_3(x)\coloneqq
\frac{\Delta_3(x)\cdot x_1}{x_1^2-4}.
\]

\begin{lemma}
\label{lem:twisted-flow}
The following statements hold whenever the functions under consideration are defined.
\begin{itemize}
\item[(1)] For every $i\in\{1,2,3\}$, we have 
$$
W_i(m_i(x))=-W_i(x).
$$
\item[(2)] We have 
$$
W_1(x)+W_2(x)+W_3(x)=0.
$$
\item[(3)] For every $i\in\{1,2,3\}$, we have
\[
W_i(\sigma_1(x))=-W_i(x),
\qquad
W_i(\sigma_2(x))=W_i(x),
\qquad \mbox{and} \qquad
W_i(\sigma_3(x))=-W_i(x).
\]
\end{itemize}
\end{lemma}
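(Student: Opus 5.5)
The three statements reduce to direct computation from the formulas for $m_i$, $\sigma_j$ and $\Delta_i$, exactly as in Lemma~\ref{lem:first-flow}. I will verify (1), then (3), and leave (2) for last, since it is the only genuine identity to check.

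For (1), recall that $\Delta_i(m_i(x))=-\Delta_i(x)$. The involution $m_i$ changes only the $i$-th coordinate. The other factor of $W_i$ involves only the coordinates other than $x_i$: $W_1$ involves $x_3$, $W_3$ involves $x_1$, and $W_2$ involves $x_1,x_3$. So that factor is unchanged under $m_i$, and the sign flips. This also shows that $W_i(m_i(x))$ is defined whenever $W_i(x)$ is.

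For (3), I track the sign of each coordinate. Under $\sigma_j$ the coordinates $x_l$ with $l\neq j$ change sign. Also $\Delta_i$ changes sign exactly when $x_i$ does, since $\Delta_i(x)=2x_i-x_ax_b$ and $x_ax_b$ transforms like $x_i$ under $K$. The squares $x_1^2$ and $x_3^2$ are invariant.

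\begin{itemize}
\item $W_1$ transforms like $x_1x_3$. This is $-1$ under $\sigma_1$ (which flips $x_3$), $+1$ under $\sigma_2$ (which flips both), and $-1$ under $\sigma_3$ (which flips $x_1$).
\item $W_2$ transforms like $x_2$. This gives $-1$, $+1$, $-1$ under $\sigma_1,\sigma_2,\sigma_3$ respectively.
\item $W_3$ transforms like $x_3x_1$, which matches $W_1$.
\end{itemize}

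This is the claimed pattern.

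For (2), I clear denominators by multiplying by $(x_1^2-4)(x_3^2-4)$. The target identity becomes
\[
-\Delta_1x_3(x_1^2-4)+2\Delta_2(x_3^2-x_1^2)+\Delta_3x_1(x_3^2-4)=0.
\]
Substituting $\Delta_1=2x_1-x_2x_3$, $\Delta_2=2x_2-x_1x_3$ and $\Delta_3=2x_3-x_1x_2$ and expanding, I group the terms by their dependence on $x_2$.

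The terms linear in $x_2$ are
\[
x_2\bigl(x_3^2(x_1^2-4)+4(x_3^2-x_1^2)-x_1^2(x_3^2-4)\bigr)=0.
\]
The terms free of $x_2$ are
\[
-2x_1x_3(x_1^2-4)-2x_1x_3(x_3^2-x_1^2)+2x_1x_3(x_3^2-4)=2x_1x_3\bigl(-x_1^2+4-x_3^2+x_1^2+x_3^2-4\bigr)=0.
\]
So the identity holds as a polynomial identity, without needing the Markoff equation. I expect this bookkeeping to be the only step requiring care, and it is routine.
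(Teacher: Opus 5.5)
Your proposal is correct and takes the same route as the paper: (1) from $\Delta_i(m_i(x))=-\Delta_i(x)$, (2) by clearing the denominator $(x_1^2-4)(x_3^2-4)$ and checking that the numerator vanishes, and (3) as a direct sign check. Your expansion (split into the $x_2$-linear and $x_2$-free parts) and your sign bookkeeping supply the calculations the paper omits, and both check out. One caveat: your side remark that $W_i(m_i(x))$ is defined whenever $W_i(x)$ is holds for each formula's own denominator, but not under the paper's convention that all $W_i$ require $x_1^2\neq4$ and $x_3^2\neq4$, since $m_1$ can move $x_1$ to $\pm2$. This is harmless, because the lemma is only asserted where the functions are defined.
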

\begin{proof}
(1) This follows from $\Delta_i(m_i(x))=-\Delta_i(x)$.

(2) By clearing the denominators, we have
\[
W_1(x)+W_2(x)+W_3(x)=\frac{-\Delta_1(x)x_3(x_1^2-4)+2\Delta_2(x)(x_3^2-x_1^2)+\Delta_3(x)x_1(x_3^2-4)}{(x_1^2-4)(x_3^2-4)}.
\]
The numerator of the right-hand side vanishes by straightforward calculations. This proves (2).

(3) It follows from straightforward calculations, which we omit.
\end{proof}

We aim to use the functions $W_i$ to define yet another flow on the quotient graph when a one-sheet lift of a shore $R$ occurs (i.e., the obstruction discussed after Lemma~\ref{lem:reachable-signs}). 
To define such a flow, we need to assign a unique value to each quotient vertex $Kx\in V(R)$ using the functions $W_i$, in a manner analogous to the use of the $K$-invariant functions $S_i$ in the proof of Lemma~\ref{lem:diagonal-bridge}.
However, by Lemma~\ref{lem:twisted-flow}(3), the functions $W_i$ are not $K$-invariant.
The advantage here is that a specified one-sheet lift naturally provides a representative vertex in $X_{p,k}^{\circ}$ for each quotient vertex $Kx\in V(R)$, allowing us to proceed despite the lack of $K$-invariance.
We carry this out in the following lemma, which ultimately shows that the case $Kx\in V(R)$ with $H_R(x)=\{1\}$ in Lemma~\ref{lem:diagonal-symmetry} can never occur.

\begin{lemma}
\label{lem:nontrivial-signs}
Assume that $k\in\FF_p\setminus\{4\}$.
Let $\overline{e}$ be a quotient bridge with label $1$, and let $R$ be a shore of $\overline{e}$ containing no cusp.
Let $e$ be a lift of $\overline{e}$, and suppose that no cycle in $G_{p,k}^\circ$ contains $e$. 
Let $x$ be the endpoint of $e$ in $G_{p,k}^\circ[\pi_k^{-1}(V(R))]$. Then $H_R(x)\neq\{1\}$.
\end{lemma}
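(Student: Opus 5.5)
Suppose for contradiction that $H_R(x)=\{1\}$. By Lemma~\ref{lem:diagonal-bridge}, since $R$ is cusp-free, $x_2^2=x_3^2\neq 0$. If $x_2^2=x_3^2=4$, then Lemma~\ref{lem:pole-cycle} (applied with $x_3=\pm2$) puts $x$ on a $2p$-cycle with alternating labels $1$ and $2$. This cycle passes through $x$ along its label-$1$ edge, which is $e$. So $e$ lies on a cycle, contrary to hypothesis. We may therefore assume $x_3^2\neq4$.

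By Lemma~\ref{lem:reachable-signs}, the component $L$ of $G_{p,k}^\circ[\pi_k^{-1}(V(R))]$ containing $x$ is a one-sheet lift. So each $Ky\in V(R)$ has a distinguished representative $\hat y\in L$, with $\hat x=x$. Every quotient edge of $R$ between $Ky$ and $Ky'$ with label $i$ lifts to an edge of $L$, so $m_i(\hat y)=\hat y'$ exactly. For each oriented quotient edge $\overline f$ with initial endpoint $Ky\in V(R)$ and label $i$, I would set $\phi(\overline f)\coloneqq W_i(\hat y)$. For $\overline e$, oriented out of $R$, this gives $\phi(\overline e)=W_1(x)$. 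Lemma~\ref{lem:twisted-flow}(1) makes the internal values orientation-consistent, since $W_i(\hat y')=W_i(m_i(\hat y))=-W_i(\hat y)$.

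For a non-loop vertex, Lemma~\ref{lem:twisted-flow}(2) gives divergence zero. For a loop of label $i$ at $Ky$, Lemma~\ref{lem:generalized-labels}(2) gives $m_i(\hat y)=\hat y$, so $\Delta_i(\hat y)=0$ and $W_i(\hat y)=0$. Hence the loop contributes nothing, and the divergence is again zero. Summing over $V(R)$, all internal edges cancel, leaving $W_1(x)=-\Delta_1(x)x_3/(x_3^2-4)=0$. This contradicts $\Delta_1(x)\neq0$ (as $\overline e$ is a non-loop) and $x_3\neq0$.

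\textbf{Main obstacle: poles.} The functions $W_i(\hat y)$ must be defined for every $Ky\in V(R)$, i.e.\ $\hat y_1^2\neq4$ and $\hat y_3^2\neq 4$. Here I would again invoke Lemma~\ref{lem:pole-cycle}. If some $\hat y$ has $\hat y_3=\pm2$, it lies on a label-$1$/$2$ cycle containing a cusp, and similarly for $\hat y_1=\pm2$ after permuting coordinates. That cycle connects $\hat y$ to a cusp. If it stayed inside $\pi_k^{-1}(V(R))$, then $R$ would contain a cusp, which is impossible. So it must leave $\pi_k^{-1}(V(R))$, and the only exit from the lift of $R$ is through lifts of $\overline e$.

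A cycle crossing the bridge must use two lifts of $\overline e$. These are $\sigma(e)$ for several $\sigma\in K$, and one of them may be $e$ itself. If the cycle uses $e$, we are done. Otherwise I would use the $K$-action, translating the cycle by $\sigma^{-1}$ so that it passes through $e$; this contradicts the hypothesis. I expect this bookkeeping to be the delicate part, together with checking that the divergence identity survives at vertices where a loop occurs. The symmetry $\rho$ from Lemma~\ref{lem:diagonal-symmetry} and the $\sigma$-behaviour in Lemma~\ref{lem:twisted-flow}(3) may be needed to handle orientation subtleties there.
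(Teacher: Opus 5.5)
Your proof is correct and follows the paper's argument. You exclude $x_3^2=4$ via Lemma~\ref{lem:pole-cycle}, pass to the one-sheet lift $L$ of Lemma~\ref{lem:reachable-signs}, and define the flow by $W_i$ at the representatives in $L$, with loops contributing nothing by Lemma~\ref{lem:generalized-labels}(2). Summing divergences over $V(R)$ then forces $W_1(x)=0$.

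The only local difference is in showing that $W_1,W_2,W_3$ have no poles on $L$. The paper argues with labels: a cycle with labels $2,3$ cannot cross the label-$1$ bridge, and a cycle with labels $1,2$ must meet $Kx$ while fixing the third coordinate, forcing $x_3^2=4$. You instead note that the cusp-containing cycle must use some lift $\sigma(e)$ of $\overline{e}$, so $\sigma^{-1}$ of it is a cycle through $e$. This handles both poles uniformly and is equally valid, so the appeal you anticipate to $\rho$ and Lemma~\ref{lem:twisted-flow}(3) is not needed.
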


\begin{proof}
Suppose to the contrary that $H_R(x)=\{1\}$. By Lemma \ref{lem:reachable-signs}, there exists a one-sheet lift $L$ of $R$ containing $x$. Since $R$ contains no cusp, every vertex of $R$ is regular and by Lemma~\ref{lem:diagonal-bridge}, we have $x_2^2=x_3^2$.

First, it is easy to verify that $x_2^2=x_3^2\neq4$. Otherwise, if $x_3^2=4$, then there exists a cycle containing $e$ by Lemma \ref{lem:pole-cycle}, which contradicts our assumption.

Second, we claim that $W_1,W_2,W_3$ are defined at every vertex of the one-sheet lift $L$. 
In other words, for each vertex $y=(y_1,y_2,y_3)\in V(L)$, we have $y_1^2\neq4$ and $y_3^2\neq4$. Assume to the contrary that there exists $y\in V(L)$ with $y_1^2=4$. 
By Lemma~\ref{lem:pole-cycle}, $y$ lies on a cycle $C$ consisting of edges with alternating labels $2$ and $3$, which contains a cusp. 
Since $R$ contains no cusp and is a shore of the quotient bridge $\overline{e}$, there must be an edge of $C$ with label $2$ or $3$ that projects to $\overline{e}$.
However, by Lemma~\ref{lem:generalized-labels}, $\overline{e}$ has a unique label, which is $1$ by assumption, a contradiction.
Now suppose that there exists $y\in V(L)$ with $y_3^2=4$. 
Then by Lemma~\ref{lem:pole-cycle} again, $y$ lies on a cycle $C$ consisting of edges with alternating labels $1$ and $2$, which contains a cusp. 
Using the same conditions, we see that there must be a vertex $z=(z_1,z_2,z_3)\in V(C)$ which projects to $Kx$. This implies that $x_3^2=z_3^2$.
Since the cycle $C$ consists only of $m_1$- and $m_2$-edges, the third coordinate remains unchanged along $C$, and hence $y_3=z_3$.
Therefore, we have 
$$
x_3^2=z_3^2=y_3^2=4,
$$
which contradicts with the fact that $x_3^2\neq4$. The proof of this claim is complete.

Recall that $L$ contains exactly one vertex in each $K$-orbit $Ky$ for every quotient vertex $Ky\in V(R)$; we call this vertex the \emph{unique lift} of $Ky$ in $L$.
Choose an orientation for each quotient edge $\overline{f}\in E(R)\cup\{\overline{e}\}$ such that $\overline{e}$ is directed out of $R$. 
Since every vertex of $R$ is regular, Lemma~\ref{lem:generalized-labels} implies that each quotient edge $\overline{f}$ has a unique label, say $i$. We then assign it the value
$$
\eta(\overline{f})\coloneqq W_i(y),
$$
where $y$ is the unique lift in $L$ of the initial endpoint of $\overline{f}$. 
By Lemma \ref{lem:twisted-flow}(1), the divergence
$$
\operatorname{div}(Ky)=\sum_{\overline{f}\in\delta^{+}(Ky)}\eta(\overline{f})-\sum_{\overline{f}\in\delta^{-}(Ky)}\eta(\overline{f})
$$
is independent of the orientation chosen for the internal edges of $R$.
Using similar arguments in the proof of Lemma \ref{lem:diagonal-bridge} and applying Lemma \ref{lem:twisted-flow}, we can deduce that
$$
W_1(x)=0.
$$
Clearly $e$ is a non-loop (as otherwise $\overline{e}$ would be a loop), and thus we have $\Delta_1(x)\neq0$. Furthermore, because $R$ contains no cusps, we have $x_3\neq0$. It follows that
\[
W_1(x)=-\frac{\Delta_1(x)\cdot x_3}{x_3^2-4}\neq0.
\]
This final contradiction finishes the proof.
\end{proof}

Finally we are ready to prove Proposition~\ref{prop:generalized-bridge}.

\begin{proof}[\bf Proof of Proposition~\ref{prop:generalized-bridge}]
Let $A$ and $B$ be the two shores of $\overline{e}$. Write $e=uv$ with $Ku\in V(A)$ and $Kv\in V(B)$. Assume to the contrary that no cycle in $G_{p,k}^\circ$ contains $e$. Without loss of generality, we may assume that the label of $e$ is $1$. 

Now we claim that 
\[
\sigma_1\in H_A(u)\cap H_B(v).
\]
If $A$ contains a cusp, Lemma \ref{lem:cuspconnected} implies that $G_{p,k}^\circ[\pi_k^{-1}(V(A))]$ is connected. 
By definition, we have $\sigma_1\in H_A(u)$. 
If $A$ contains no cusps, by applying Lemma \ref{lem:diagonal-bridge}, we can derive that
\[
u_2^2=u_3^2.
\]
It then follows from Lemma \ref{lem:nontrivial-signs} that $H_A(u)\neq\{1\}$. Then Lemma \ref{lem:diagonal-symmetry} shows that $\sigma_1\in H_A(u)$. Hence, in both cases, we have $\sigma_1\in H_A(u)$. Similarly, we can also deduce that $\sigma_1\in H_B(v)$, proving the claim.

Therefore, there exists a path $P_A$ (resp. $P_B$) in $G_{p,k}^\circ[\pi_k^{-1}(V(A))]$ (resp. $G_{p,k}^\circ[\pi_k^{-1}(V(B))]$) connecting $\sigma_1(u)$ and $u$ (resp. $v$ and $\sigma_1(v)$). 
Together with the two crossing edges $e$ and $\sigma_1(e)$, these paths form a closed walk
\[
u \xline{e} v
\xline{P_B} \sigma_1(v)
\xline{\sigma_1(e)} \sigma_1(u)
\xline{P_A} u.
\]
Note that neither $P_A$ nor $P_B$ contains the edge $e$. Because the $K$ acts freely on $X_{p,k}^{\circ}$, we have $\sigma_1(e)\neq e$. Hence, the closed walk gives a cycle containing $e$ in $G_{p,k}^\circ$, which contradicts with the assumption that $e$ lies on no cycles. This finishes the proof of Proposition~\ref{prop:generalized-bridge}.
\end{proof}

\section{Proof of the main theorem}
\label{sec:main_thm}

Now we are ready to present the proof of our main theorem.

\begin{proof}[\bf Proof of Theorem~\ref{thm:generalized-main}]
First, Theorem \ref{thm:generalized-main}(1) follows directly from Lemma \ref{lem:small-components}. 
To prove Theorem \ref{thm:generalized-main}(2), consider any connected component $C$ of $G_{p,k}$ that contains no axial vertex. Then by Lemma \ref{lem:small-components}, $C$ is also a component in $G_{p,k}^{\circ}$ with
\(|V(C)|\geq3.\)

Let $e=\{x,y\}$ be an arbitrary non-loop edge of $C$. 
We first show that $e$ is contained in a cycle of $G_{p,k}^{\circ}$, distinguishing cases according to the type of $\overline{e}$.

\vspace{0.1cm}
\noindent\textbf{Case 1: $\overline{e}$ is a non-loop non-bridge edge in $\overline{G}_{p,k}$}.
\vspace{0.1cm}

Since $\overline{e}$ is a non-bridge edge in $\overline{G}_{p,k}$, there exists a path $\overline{P}$ from $Kx$ to $Ky$ in $\overline{G}_{p,k}-\overline{e}$. Lift it to a path $P$ in $G_{p,k}^\circ$ from $x$ to $\sigma(y)$ for some $\sigma\in K$. If $\sigma=1$, then $P\cup\{e\}$ contains a cycle through $e$. If $\sigma\neq1$, consider the following closed walk
\[
y \xline{e} x
\xline{P} \sigma(y)
\xline{\sigma(e)} \sigma(x)
\xline{\sigma(P)} \sigma^2(y)=y.
\]
Because $\overline{P}$ avoids $\overline{e}$, neither $P$ nor $\sigma(P)$ contains $e$. 
Moreover, $\sigma(e)\neq e$, as otherwise $\sigma$ would exchange the endpoints of $e$, making $\overline{e}$ a quotient loop, a contradiction.
Thus, the above closed walk traverses $e$ exactly once and consequently contains a cycle through $e$ in $G_{p,k}^{\circ}$.

\vspace{0.2cm}
\noindent\textbf{Case 2: $\overline{e}$ is a bridge in $\overline{G}_{p,k}$}.
\vspace{0.1cm}

By Proposition~\ref{prop:generalized-bridge}, the edge $e$ is contained in a cycle in $G_{p,k}^{\circ}$.

\vspace{0.2cm}
\noindent\textbf{Case 3: $\overline{e}$ is a loop in $\overline{G}_{p,k}$}.
\vspace{0.1cm}

Without loss of generality, we may assume that $e$ has label $1$. Since $e$ itself is not a loop, we have
\begin{equation}
\label{eqn:loop_condition}
m_1(x)=\sigma(x) 
\end{equation}
for some non-identity $\sigma\in K$. Write $x=(x_1,x_2,x_3)$. It follows from (\ref{eqn:loop_condition}) that
\[
\begin{array}{c|c}
\sigma & \text{Implications of \eqref{eqn:loop_condition}}\\
\hline
\sigma_1 & x_1=x_2=x_3=0,\\
\sigma_2 & x_3=0,\\
\sigma_3 & x_2=0.
\end{array}
\]
In all three cases, $x$ is either an axial vertex or a cusp.
Since the connected component $C$ contains no axial vertices, it suffices to consider the case when $x$ is a cusp. In this case, Lemma~\ref{lem:cusp4cycle} implies that the edge $e=\{x,\sigma(x)\}$ lies on a $4$-cycle in $G_{p,k}^{\circ}$, as desired. 

We have shown in all of the above three cases that every non-loop edge of the component $C$ is contained in a cycle of $G_{p,k}^{\circ}$. 
It remains to show that the component $C$ is $2$-connected. Suppose to the contrary that $C-\{v\}$ has at least two connected components for some vertex $v$. Each such component must be joined to $v$ by at least two edges; otherwise, its unique attaching edge would be a bridge of $C$, contradicting the statement just proved.
Therefore $v$ has at least four distinct non-loop neighbors in $C$. This is impossible because the three Vieta involutions give every vertex at most three distinct non-loop neighbors. Since $|V(C)|\geq3$, the component $C$ is $2$-connected. This completes the proof.
\end{proof}

\section*{Acknowledgements}
The authors are grateful to Joseph Silverman for his insightful comments and suggestions on an earlier version, which led us to investigate the generalized Markoff graphs.
J.M. is supported by the National Key Research and Development Program of China, grant 2023YFA1010201, and by the National Natural Science Foundation of China, grant 12125106.
Z.Y. is supported by the China Postdoctoral Science Foundation - Anhui Joint Support Program under Grant Number 2025T001AH and by Mozi Outstanding Young Talent Special Subsidy, provided by University of Science and Technology of China.

\section*{Declaration on the Use of AI}

During the preparation of an initial version of this work, which focused on the Markoff graphs $G_p$, the authors used AI systems to assist in generating candidate proof strategies, particularly in identifying suitable functions for constructing the flows used in the proof. The authors subsequently found that the method extends to the generalized Markoff graphs $G_{p,k}$. All AI-generated suggestions were independently verified and refined by the authors, who take full responsibility for the correctness of the paper.

\bibliographystyle{plain}

\bigskip

\end{document}